        \theoremstyle{plain}
        \newtheorem{theorem}{Theorem}[section]
        \newtheorem{corollary}[theorem]{Corollary}
        \newtheorem{lemma}[theorem]{Lemma}
        \theoremstyle{definition}
        \newtheorem{definition}[theorem]{Definition}
        \newtheorem{example}[theorem]{Example}
        \theoremstyle{remark}
        \newcommand{\suchthat}{\,:\,}
        \newcommand{\itemref}[1]{\eqref{#1}}
        \newcommand{\Z}{\mathbb{Z}}
        \newcommand{\F}{\mathbb{F}}
        \newcommand{\Orb}{\mathcal{O}}   % structure sheaf 
       \DeclareMathOperator{\spec}{Spec} % spectrum of a ring
        \newcommand{\red}[1]{{#1}_{\mathrm{red}}} % reduction of a space
           \newcommand{\lisset}{\mathrm{lis\text{\nobreakdash-}\acute{e}t}}
        \newcommand{\MOD}{\mathsf{Mod}}    % the category of modules      
        \DeclareMathOperator{\coker}{coker}
         \DeclareMathOperator{\Hom}{Hom}
        \DeclareMathOperator{\supp}{supp}
        \newcommand{\COHO}[1]{\mathcal{H}^{{#1}}}
        \newcommand{\trunc}[1]{\tau^{{#1}}}
        \newcommand{\RDERF}{\mathsf{R}}
        \newcommand{\LDERF}{\mathsf{L}}
        \newcommand{\DCAT}{\mathsf{D}}
        \DeclareMathOperator{\supph}{supph} % homological support
        \newcommand{\QCOH}{\mathsf{QCoh}}
        \newcommand{\PERF}{\mathsf{Perf}}
        \newcommand{\ID}[1]{\mathrm{Id}_{#1}}    
        \DeclareMathOperator{\End}{End}
        \newcommand{\tensor}{\otimes}
\renewcommand{\mid}{|}
\numberwithin{equation}{section}
\newcommand{\qcsubscript}{\mathrm{qc}} % subscript
\newcommand{\DQCOH}[1][]{\DCAT_{\qcsubscript{#1}}} 
\newcommand{\QCPSH}[1]{(#1_{\qcsubscript})_*} %% this is for D_qc
\newcommand{\QCPBK}[1]{#1^*_{\qcsubscript}}
\newcommand{\spref}[1]{\href{http://stacks.math.columbia.edu/tag/#1}{#1}}
\newcommand{\labitem}[2]{%
\def\@itemlabel{(\textbf{#1})}
\item
\def\@currentlabel{\textbf{#1}}\label{#2}}
\newcommand{\cms}{\mathrm{cms}}
\newcommand{\SPB}{\mathrm{Sp}_{\mathrm{Bal}}}
\newcommand{\tensthck}[1]{\langle{#1}\rangle_{\tensor}}
\title{The Balmer spectrum of a tame stack}
\date{November 24, 2014}
\author[J. Hall]{Jack Hall}
\address{Mathematical Sciences Institute\\The Australian National
  University\\Acton, ACT, 2601\\Australia}
\email{jack.hall@anu.edu.au}
\subjclass[2010]{Primary 14F05; secondary 13D09, 14A20, 18G10}
\keywords{Derived categories, algebraic stacks}
\renewcommand{\SPB}{\mathrm{Sp}^{\mathrm{Bal}}}
\renewcommand{\cms}{\mathrm{cs}}
\newcommand{\STACK}{\mathbf{Stack}}
\begin{document}
\maketitle
\begin{abstract}
  Let $X$ be a quasi-compact algebraic stack with quasi-finite and separated diagonal. We classify the thick $\otimes$-ideals of $\DQCOH(X)^c$. If $X$ is tame, then we also compute the Balmer spectrum of the $\otimes$-triangulated category of perfect complexes on $X$. In addition, if $X$ admits a coarse space $X_{\mathrm{cs}}$, then we prove that the Balmer spectra of $X$ and $X_{\mathrm{cs}}$ are naturally isomorphic. 
\end{abstract}

\section{Introduction}
Let $X$ be a quasi-compact and quasi-separated scheme. Let $\PERF(X)$
be the $\tensor$-triangulated category of perfect complexes on $X$. A
celebrated result of Thomason \cite[Thm.~3.15]{MR1436741}, extending
the work of Hopkins \cite[\S 4]{MR932260} and Neeman
\cite[Thm.~1.5]{MR1174255}, is a classification of the thick
$\tensor$-ideals of $\PERF(X)$ in terms of the \emph{Thomason} subsets
of $|X|$, which are those subsets $Y \subseteq |X|$ expressible as a
union $\cup_\alpha Y_\alpha$ such that $|X|\setminus Y_\alpha$ is
quasi-compact and open.

If $X$ is a quasi-compact and quasi-separated algebraic space,
Deligne--Mumford stack, or algebraic stack, then it is also natural to
consider the $\tensor$-triangulated category $\PERF(X)$ of perfect
complexes on $X$ (see \cite[\S3]{perfect_complexes_stacks} for precise
definitions). 

In general, Thomason's classification of thick $\tensor$-ideals of
$\PERF(X)$ fails for algebraic stacks (Example \ref{E:nontame}). If
one instead works with the $\tensor$-ideal $\DQCOH(X)^c \subseteq
\PERF(X)$ of \emph{compact} perfect complexes, then the first main result of
this article is that the classification goes through without change.
\begin{theorem}[Classification of thick
  $\tensor$-ideals]\label{T:thomason_classification}
  Let $X$ be a quasi-compact algebraic stack with quasi-finite and
  separated diagonal. Then there is a bijective correspondence
  between thick $\tensor$-ideals of $\DQCOH(X)^c$ and Thomason subsets of
  $|X|$.
\end{theorem}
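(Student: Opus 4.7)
The plan is to realise the correspondence via the two maps
\[
Y \;\longmapsto\; \mathcal{T}(Y) \;:=\; \DQCOH(X)^c \cap \DQCOH_Y(X), \qquad
\mathcal{I} \;\longmapsto\; \suppbalmer(\mathcal{I}) \;:=\; \bigcup_{P \in \mathcal{I}} \supp(P),
\]
and then to verify that they are inverse to each other. First I would check well-definedness: $\mathcal{T}(Y)$ is plainly a thick subcategory closed under tensoring with any compact object, and for $\mathcal{I}$ a thick $\tensor$-ideal of $\DQCOH(X)^c$ each $\supp(P)$ is the complement of a quasi-compact open (since $P$ compact vanishes over a maximal quasi-compact open), so $\suppbalmer(\mathcal{I})$ is Thomason.

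The key input I would appeal to is compact generation of $\DQCOH(X)$ under the hypothesis that $X$ has quasi-finite and separated diagonal; this is the author's earlier work with Rydh and is really what the hypothesis is buying us. From this compact-generation statement I would extract the more refined assertion that, for every quasi-compact open $j\colon U \to X$ with closed complement $Z \subseteq |X|$, the localising subcategory $\DQCOH_Z(X) \subseteq \DQCOH(X)$ is itself compactly generated by objects compact in $\DQCOH(X)$, i.e.\ by $\DQCOH(X)^c \cap \DQCOH_Z(X)$. Writing an arbitrary Thomason $Y$ as a filtered union $Y = \bigcup_\alpha Y_\alpha$ of such complements, one gets compact generation of $\DQCOH_Y(X)$ with the analogous description of its compact objects. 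This step immediately implies $\suppbalmer(\mathcal{T}(Y)) = Y$, since every point of $Y$ lies in the support of one of the generating compact objects.

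The remaining, and genuinely substantive, step is to show $\mathcal{T}(\suppbalmer(\mathcal{I})) = \mathcal{I}$; equivalently, that any $P \in \DQCOH(X)^c$ with $\supp(P) \subseteq Y := \suppbalmer(\mathcal{I})$ already lies in $\mathcal{I}$. Here I would run the standard Thomason--Neeman argument in the abstract: let $\langle \mathcal{I}\rangle_{\mathrm{loc}}$ denote the localising $\tensor$-subcategory of $\DQCOH(X)$ generated by $\mathcal{I}$. Using compact generation of $\DQCOH_Y(X)$ and the inclusions $\mathcal{I} \subseteq \DQCOH_Y(X)$ with $\suppbalmer(\mathcal{I}) = Y$, one sees that $\langle \mathcal{I}\rangle_{\mathrm{loc}} = \DQCOH_Y(X)$. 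Applying Neeman's theorem identifying compact objects of a compactly generated localisation then yields
\[
\langle \mathcal{I}\rangle_{\mathrm{loc}}^{c} \;=\; \DQCOH(X)^c \cap \DQCOH_Y(X) \;=\; \mathcal{T}(Y),
\]
and Neeman's further result that every compact object of a localising subcategory generated by a set of compact objects lies in the thick $\tensor$-ideal they generate (here $\mathcal{I}$ itself, using that $\mathcal{I}$ is already a thick $\tensor$-ideal of $\DQCOH(X)^c$) finishes the proof. The main obstacle I expect is the verification of compact generation of $\DQCOH_Y(X)$ with compacts described as above; everything else is formal once that input is in hand, and the hypothesis on the diagonal is precisely what makes that input available.
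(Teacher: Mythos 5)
Your two maps are the same as the paper's ($\mathcal{I}_Y=\{P\in\DQCOH(X)^c:\supph(P)\subseteq Y\}$ and $\mathcal{T}\mapsto\bigcup_{Q\in\mathcal{T}}\supph(Q)$), and the direction $Y=\bigcup_{P\in\mathcal{T}(Y)}\supph(P)$ is fine: compact generation with supports is indeed available from \cite[Thm.~A]{perfect_complexes_stacks}, exactly as the paper uses it. The gap is in the other direction, at the sentence ``one sees that $\langle\mathcal{I}\rangle_{\mathrm{loc}}=\DCAT_{\qcsubscript,Y}(X)$.'' Knowing that $\DCAT_{\qcsubscript,Y}(X)$ is compactly generated by $\DQCOH(X)^c\cap\DCAT_{\qcsubscript,Y}(X)$, and that the supports of the objects of $\mathcal{I}$ cover $Y$, does \emph{not} formally imply that $\mathcal{I}$ generates $\DCAT_{\qcsubscript,Y}(X)$: to run the Bousfield/Neeman argument you must show that an object $M$ with $\supph(M)\subseteq Y$ and $Q^\vee\tensor^{\LDERF}_{\Orb_X}M=0$ for all $Q\in\mathcal{I}$ is zero, or equivalently that every compact object supported in $\bigcup_{Q\in\mathcal{I}}\supph(Q)$ lies in $\tensthck{\mathcal{I}}$. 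Support arguments at the level of unbounded, non-coherent objects do not settle this (already over $k[x]_{(x)}$ one has $\mathrm{cone}(x)\tensor^{\LDERF} k(x)=0$ with both factors nonzero at the closed point, so the tensor--support formula you would need fails outside perfect complexes), and Example \ref{E:nontame} shows that support containment alone does not control ideal membership once compactness is relaxed. So this step is not ``formal once compact generation is in hand''; it is equivalent in strength to Lemma \ref{L:supports}, which is the heart of the theorem.

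That is exactly where the paper does its real work: Lemma \ref{L:supports} (support containment implies containment of thick $\tensor$-ideals for \emph{compact} objects) is deduced, following Thomason, from the tensor nilpotence theorem with parameters (Theorem \ref{T:tens_nilp}), whose proof for stacks with quasi-finite separated diagonal requires the induction principle (Theorem \ref{T:induction_qf_diag}), the Mayer--Vietoris/\'etale-neighbourhood argument (Lemma \ref{L:tensor_nilp_local}), and the finite flat splitting Lemma \ref{L:finite_splitting} resting on Lemma \ref{L:compactbg}. You have therefore misplaced the main obstacle: it is not compact generation of $\DCAT_{\qcsubscript,Y}(X)$ (which your cited input already provides), but the nilpotence-based detection result that your ``one sees'' silently assumes. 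To repair the proposal you would need to prove a statement of the form of Theorem \ref{T:tens_nilp} (or Lemma \ref{L:supports}) yourself; with it, your formal framework does then close up, and indeed becomes essentially the paper's argument in different packaging.
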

Some special cases of Theorem \ref{T:thomason_classification} are
the following:
\begin{itemize}
\item if $k$ is a field and $G$ is a finite group, then
  $\DCAT^b(\mathsf{Proj}\,kG)$ has no non-trivial $\tensor$-ideals;
\item if $Y$ is a quasi-projective
  scheme over a field $k$ with a proper action of a group scheme $G$,
  then the thick $\tensor$-ideals of $\DCAT(\QCOH^G(Y))^c$ are in
  bijective correspondence with the $G$-invariant Thomason subsets of
  $X$. 
\end{itemize}
The first special case is easy to prove directly and is well-known
(cf.~\cite[Prop.~2.1]{MR2846489}). In some sense, this makes our
results orthogonal to \cite{MR2846489}. The second special case was only known
in characteristic $0$ when $Y$ was normal or quasi-affine
\cite[Thm.~7.8]{MR2570954} or in characteristic $p$ when $G$ is finite
of order prime to $p$ and $X$ is smooth \cite[Thm.~1.2]{MR2927050}.

We prove Theorem \ref{T:thomason_classification} using tensor
nilpotence with parameters (Theorem \ref{T:tens_nilp}), which extends \cite[Thm.~3.8]{MR1436741} and
\cite[Thm.~10ii]{MR932260} (cf. \cite[1.1]{MR1174255}) to quasi-compact algebraic
stacks with quasi-finite and separated diagonal. As should be expected, stacks of the form $[Y/G]$, where $Y$ is an
affine variety over a field $k$ and $G$ is a finite group with order divisible by
the characteristic of $k$, are the most troublesome. This is dealt
with in Lemma \ref{L:finite_splitting}, which relies on some results
developed in Appendix \ref{A:tamestacks}.

If $\mathcal{T}$ is a $\tensor$-triangulated category, then Balmer
\cite{MR2196732} has functorially constructed from $\mathcal{T}$ a
locally ringed space $\SPB(\mathcal{T})$, the \emph{Balmer spectrum}. 
A fundamental result of Balmer \cite[Thm.~5.5]{MR2196732}, which was
extended by Buan--Krause--Solberg \cite[Thm.~9.5]{MR2280286} to the
non-noetherian setting, is that if $X$ is a quasi-compact and
quasi-separated scheme, then there is a naturally induced isomorphism
\[
X \to \SPB(\PERF(X)).
\]

An algebraic stack is \emph{tame} if its stabilizer groups at geometric points are 
finite linearly reductive group schemes \cite[Defn.~2.2]{MR2427954}. Every scheme and algebraic space is
tame. Moreover, in characteristic zero, a stack is Deligne--Mumford if and
only if is tame. In characteristic $p>0$, there are non-tame
Deligne--Mumford stacks (e.g., $B_{\F_p}(\Z/p\Z)$) and tame stacks
that are not Deligne--Mumford (e.g., $B_{\F_p}\mu_p$). Nagata's
Theorem \cite[Thm.~1.2]{hallj_dary_alg_groups_classifying} provides a
classification of finite linearly reductive group schemes over fields, thus 
allows one to determine whether a given algebraic stack is tame. Note that our definition of tame stack is substantially weaker than that what appears in \cite[Defn.~3.1]{MR2427954} (see Appendix \ref{A:tamestacks}).

Tame stacks are precisely those stacks with quasi-finite diagonal such
that the compact objects of $\DQCOH(X)$ coincide with the perfect complexes. Using Theorem
\ref{T:thomason_classification}, we extend the result of
Buan--Krause--Solberg to tame stacks. 
\begin{theorem}\label{T:balmer_tame}
  Let $X$ be a quasi-compact algebraic stack with quasi-finite and separated diagonal. If $X$ is tame, then there is a natural isomorphism of locally ringed spaces
  \[
  (|X|,\Orb_{X_{\mathrm{Zar}}}) \to \SPB(\PERF(X)),
  \]
  where $\Orb_{X_{\mathrm{Zar}}}$ is the Zariski sheaf $U\mapsto \Gamma(U,\Orb_X)$.
\end{theorem}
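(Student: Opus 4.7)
The plan is to combine Theorem \ref{T:thomason_classification} with Balmer's general reconstruction procedure. Since $X$ is tame, the characterization recalled in the introduction identifies $\PERF(X)$ with $\DQCOH(X)^c$ as $\tensor$-triangulated categories, so Theorem \ref{T:thomason_classification} immediately yields a bijection between thick $\tensor$-ideals of $\PERF(X)$ and Thomason subsets of $|X|$, implemented by the support map $E \mapsto \supp(E) = \{x \in |X| \suchthat E\ltensor \kappa(x) \neq 0\}$.

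Next I would identify the underlying topological spaces. A prime $\tensor$-ideal $\mathcal{P} \subseteq \PERF(X)$ corresponds under the classification to a Thomason subset $Y_\mathcal{P}$ whose complement is precisely the set of generalizations of a unique point $x \in |X|$; bijectivity uses that $|X|$ is sober, which holds because $X$ is quasi-compact with quasi-finite and separated diagonal. Under the resulting set-theoretic map $|X| \to \SPB(\PERF(X))$, the Balmer closed set $\suppbalmer(E) = \{\mathcal{P} \suchthat E \notin \mathcal{P}\}$ corresponds to the cohomological support $\supp(E) \subseteq |X|$, and since these cohomological supports are exactly the closed subsets of $|X|$ with quasi-compact open complement, the map is a homeomorphism.

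For the structure sheaves I would argue as follows. For each quasi-compact open substack $j : \mathcal{U} \hookrightarrow X$, the restriction $j^* : \PERF(X) \to \PERF(\mathcal{U})$ identifies (after idempotent completion) $\PERF(\mathcal{U})$ with the Verdier quotient of $\PERF(X)$ by the thick $\tensor$-ideal of perfect complexes with support in $|X|\setminus|\mathcal{U}|$. This is the Thomason--Neeman localization theorem \cite[Thm.~2.1]{MR1174255}, applied via the fact that $\DQCOH(X)$ is compactly generated with $\PERF(X) = \DQCOH(X)^c$ by tameness. Balmer's structure sheaf evaluated on the open corresponding to $|\mathcal{U}|$ therefore computes $\End_{\PERF(\mathcal{U})}(\Orb_\mathcal{U}) = \Gamma(\mathcal{U},\Orb_\mathcal{U}) = \Gamma(|\mathcal{U}|, \Orb_{X_{\mathrm{Zar}}})$. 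Since quasi-compact opens form a basis of $|X|$, this presheaf identification sheafifies to an isomorphism of structure sheaves, and the resulting morphism is automatically one of locally ringed spaces by the formal properties of Balmer's $\SPB$.

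The main obstacle I expect is the Thomason--Neeman localization step for tame stacks: one must verify that $j^*$ realizes the desired Verdier quotient, with kernel the perfect complexes supported on the closed complement. For qcqs schemes this is classical, but in the stacky setting one needs both the compact generation of $\DQCOH(X)$ and the identification of compact objects with perfect complexes, both of which are supplied precisely by the tameness hypothesis via the machinery underlying Theorem \ref{T:thomason_classification}. Once this localization is in hand, the remainder of the argument is formal from the Thomason classification and Balmer's reconstruction.
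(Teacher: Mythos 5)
Your proposal is correct and follows essentially the same route as the paper: tameness identifies $\PERF(X)$ with $\DQCOH(X)^c$ so that Theorem \ref{T:thomason_classification} makes $(|X|,\supph)$ a classifying support datum on the spectral space $|X|$, giving the homeomorphism (the paper simply cites \cite[Prop.~6.1]{MR2280286} where you unwind the sobriety/prime-ideal argument by hand), and the structure sheaves are compared exactly as you do, via Thomason--Neeman localization identifying $\PERF(U)$ up to thick closure with the Verdier quotient on quasi-compact opens, the needed compact generation and kernel description being supplied by the companion results in \cite{perfect_complexes_stacks}.
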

Theorem \ref{T:balmer_tame} implies that the Balmer spectrum cannot be used to reconstruct locally separated algebraic spaces \cite[Ex.~2]{MR0302647}. Balmer \cite{2013arXiv1309.1808B} has recently initiated the study of unramified monoids in $\tensor$-triangulated categories and Neeman \cite{neeman_unramified2014} has classified them in the case of a separated noetherian scheme. It is hoped that a refinement of the Balmer spectrum can be constructed from unramified monoids, which would---at least---permit the reconstruction of algebraic spaces. 
 
If $X$ is an algebraic stack with finite inertia (e.g. a separated
Deligne--Mumford stack), then $X$ admits a \emph{coarse space} $\pi
\colon X \to X_{\cms}$ \cite{MR1432041,MR3084720}, which is the
universal map from $X$ to an algebraic space. If $X$ has finite inertia, then $X$ has separated diagonal. Thus we can also establish the following.
\begin{theorem}\label{T:balmer_tame_coarse}
  Let $X$ be a quasi-compact, quasi-separated algebraic stack with
  finite inertia and coarse space $\pi\colon X \to X_{\cms}$. If $X$
  is tame, then
  \[
  \SPB(\LDERF \pi^*) \colon \SPB(\PERF(X)) \to \SPB(\PERF(X_{\cms}))
  \]
  is an isomorphism of ringed spaces. 
\end{theorem}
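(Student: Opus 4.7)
The plan is to reduce to Theorem \ref{T:balmer_tame}, which applies to $X$ by hypothesis and also to $X_{\cms}$, since every quasi-compact quasi-separated algebraic space is trivially tame (its stabilizers are trivial) and has quasi-finite separated diagonal. This yields natural isomorphisms of locally ringed spaces
\[
\sigma_X\colon (|X|,\Orb_{X_{\mathrm{Zar}}}) \to \SPB(\PERF(X)), \qquad \sigma_{X_{\cms}}\colon (|X_{\cms}|,\Orb_{X_{\cms,\mathrm{Zar}}}) \to \SPB(\PERF(X_{\cms})).
\]
It then suffices to show that under these identifications $\SPB(\LDERF\pi^*)$ corresponds to the morphism of ringed spaces $(|\pi|,\pi^\sharp)$ induced by $\pi$, and that this morphism is itself an isomorphism.

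The latter assertion is essentially the defining property of a coarse moduli space for a tame stack with finite inertia. On topological spaces, $|\pi|\colon |X|\to |X_{\cms}|$ is a homeomorphism by the Keel--Mori theorem. On Zariski structure sheaves, the comparison map $\Orb_{X_{\cms,\mathrm{Zar}}} \to |\pi|_*\Orb_{X,\mathrm{Zar}}$ evaluates on a quasi-compact open $V\subseteq |X_{\cms}|$ to the natural map $\Gamma(V,\Orb_{X_{\cms}}) \to \Gamma(|\pi|^{-1}(V),\Orb_X)$, and this is an isomorphism because $\pi_*\Orb_X = \Orb_{X_{\cms}}$ for the coarse moduli space of a tame stack.

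The remaining claim is a naturality statement for Balmer's universal construction applied to the tensor triangulated functor $\LDERF\pi^*\colon \PERF(X_{\cms}) \to \PERF(X)$. Tracing through the proof of Theorem \ref{T:balmer_tame}, the underlying map of $\SPB(\LDERF\pi^*)$ is determined by how thick $\tensor$-ideals behave under pullback; by Theorem \ref{T:thomason_classification} this coincides with the Thomason subset obtained by set-theoretic preimage under $|\pi|$. The map on structure sheaves arises from the functoriality of endomorphism rings of the identity functor and therefore identifies with $\pi^\sharp$ on global sections.

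The main obstacle is to make the compatibility of $\sigma$ with $\LDERF\pi^*$ precise. It is formal from the universal property of the Balmer spectrum together with the description of supports in Theorem \ref{T:thomason_classification} --- both being intrinsically invariants of the $\tensor$-triangulated category --- but it requires unwinding the construction of $\sigma$ in Theorem \ref{T:balmer_tame} to confirm that the transported morphism really is $(|\pi|,\pi^\sharp)$ and not merely an abstract isomorphism between homeomorphic ringed spaces.
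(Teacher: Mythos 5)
Your proposal is correct and follows essentially the same route as the paper: apply Theorem \ref{T:balmer_tame} to both $X$ and $X_{\cms}$ (the latter being a quasi-compact, quasi-separated algebraic space, hence tame with quasi-finite separated diagonal), and use that $\pi$ induces an isomorphism $(|X|,\Orb_{X_{\mathrm{Zar}}}) \to (|X_{\cms}|,\Orb_{(X_{\cms})_{\mathrm{Zar}}})$ of locally ringed spaces, which the paper simply cites from Rydh's Keel--Mori theorem, together with the naturality of the comparison map in Theorem \ref{T:balmer_tame}. The naturality check you flag as the "main obstacle" is exactly the point the paper also leaves implicit (it is the word "natural" in Theorem \ref{T:balmer_tame}), and your sketch of it (supports pull back along $|\pi|$, endomorphism rings map via $\pi^\sharp$) is the right verification.
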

Krishna \cite[Thm.~7.10]{MR2570954} proved Theorem
\ref{T:balmer_tame_coarse} when $X$ is of the form $[W/G]$, where $W$
is quasi-projective and normal or quasi-affine, and $G$ is a linear
algebraic group in characteristic $0$ acting properly on
$W$. Dubey--Mallick \cite[Thm.~1.2]{MR2927050} proved a similar result
in positive characteristic, but required $W$ to be smooth and $G$ a
finite group with order not divisible by the characteristic of the
ground field. In particular, Theorem \ref{T:balmer_tame_coarse} is
stronger than all existing results and Theorems \ref{T:thomason_classification} and \ref{T:balmer_tame} are new.
\subsection*{Assumptions and conventions}
A priori, we make no separation assumptions on our algebraic
stacks. However, all stacks used in this article will be, at the
least, quasi-compact and quasi-separated. Usually, they will also have separated diagonal. If $X$ is an algebraic stack, then let $|X|$ denote its associated Zariski topological space \cite[\S5]{MR1771927}. For derived categories of
algebraic stacks, we use the conventions and notations of \cite[\S
1]{perfect_complexes_stacks}. In particular, if $X$ is an algebraic
stack, then $\MOD(X)$ is the abelian category of $\Orb_X$-modules on
the lisse-\'etale site of $X$ and $\DQCOH(X)$ denotes the unbounded
derived category of $\Orb_{X_{\lisset}}$-modules with quasi-coherent
cohomology sheaves. If $f\colon X \to Y$ is a morphism of algebraic
stacks, then there is always an adjoint pair of unbounded derived
functors
\[
\xymatrix{\DQCOH(X) \ar@<0.5ex>[r]^{\RDERF \QCPSH{f}} &
  \ar@<0.5ex>[l]^{\LDERF \QCPBK{f}} \DQCOH(Y).}
\]
If $f$ is quasi-compact, quasi-separated and representable, then
$\RDERF \QCPSH{f}$ agrees with $\RDERF f_*$, the unbounded derived
functor of $f_* \colon \MOD(X) \to \MOD(Y)$ \cite[Lem.~2.5(3)\ \&\
Thm.~2.6(2)]{perfect_complexes_stacks}. If $f$ is flat, then
$\LDERF \QCPBK{f}$ agrees with the unique extension of the exact
functor $f^* \colon \MOD(Y) \to \MOD(X)$ to the unbounded derived
category.
\subsection*{Acknowledgements}
I would like to thank Amnon Neeman for his encouragement and several
useful discussions and Ben Antieau for some encouraging remarks and observations. I would also like to thank David Rydh for several
useful suggestions regarding tame stacks and their coarse moduli spaces. 
\section{Tensor nilpotence with parameters}
We begin with the following definition.
\begin{definition}
  Let $X$ be an algebraic stack and let $\xi \colon M \to N$ be a morphism in
  $\DQCOH(X)$. If $Z \subseteq |X|$ is a subset, then $\xi$
  \emph{vanishes at the points of $Z$} if for every algebraically
  closed field $k$ and morphism $z\colon \spec k \to X$ that factors
  through $Z$, then $\LDERF \QCPBK{z}\xi$ is the zero map in
  $\DQCOH(\spec k)$. 
\end{definition}
The following lemma will connect this definition with a more
familiar notion for schemes. 
\begin{lemma}\label{L:vanishing_wd}
  Let $X$ be a scheme and let $\xi \colon M \to N$ be a morphism in
  $\DQCOH(X)$. If $Z \subseteq |X|$ is a subset, then $\xi$
  \emph{vanishes at the points of $Z$} if and only if
  $\xi\tensor^{\LDERF}_{\Orb_X} \kappa(z)$ is the zero map in
  $\DCAT(\kappa(z))$ for every $z \in Z$, where $\kappa(z)$ denotes
  the residue field of $z$. 
\end{lemma}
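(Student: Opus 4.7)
The plan is to identify the pullback $\LDERF \QCPBK{i_z}\xi$ for the canonical morphism $i_z\colon \spec \kappa(z) \to X$ (with $z \in X$ a scheme-theoretic point) with the tensor product $\xi \tensor^{\LDERF}_{\Orb_X} \kappa(z)$, and then to compare this to what happens for arbitrary morphisms $\spec k \to X$ with algebraically closed source by passing to residue fields and their algebraic closures. The argument splits naturally into the two implications.

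For the reverse implication, I would take an algebraically closed field $k$ and a morphism $z\colon \spec k \to X$ whose image lies in $Z$. Since $X$ is a scheme, $z$ has a unique image point $p \in Z$, and $z$ factors canonically as $\spec k \xrightarrow{\varphi} \spec \kappa(p) \xrightarrow{i_p} X$ with $i_p$ flat (in fact, a monomorphism). Consequently
\[
\LDERF \QCPBK{z}\xi \;=\; \LDERF\varphi^*\bigl(\xi \tensor^{\LDERF}_{\Orb_X}\kappa(p)\bigr),
\]
so vanishing of $\xi \tensor^{\LDERF}_{\Orb_X}\kappa(p)$ forces vanishing of $\LDERF \QCPBK{z}\xi$.

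For the forward implication, fix $z \in Z$, choose an algebraic closure $\kappa(z) \hookrightarrow \bar{\kappa(z)}$, and let $\tilde{z}\colon \spec \bar{\kappa(z)} \to X$ denote the composite with $i_z$. Since $\tilde{z}$ factors through $Z$ (its image is $z$) and $\bar{\kappa(z)}$ is algebraically closed, the hypothesis gives $\LDERF \QCPBK{\tilde{z}}\xi = 0$, i.e.
\[
\bigl(\xi \tensor^{\LDERF}_{\Orb_X}\kappa(z)\bigr)\tensor_{\kappa(z)}\bar{\kappa(z)} \;=\; 0 \quad\text{in } \DCAT(\bar{\kappa(z)}).
\]
Now I would invoke the fact that over a field every complex is formal, so a morphism in $\DCAT(\kappa(z))$ is zero if and only if it induces the zero map on every cohomology group; since the field extension $\kappa(z)\to\bar{\kappa(z)}$ is faithfully flat, this can be tested after base change, yielding $\xi \tensor^{\LDERF}_{\Orb_X}\kappa(z)=0$.

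The only point requiring care—though hardly a genuine obstacle—is justifying the identification $\LDERF \QCPBK{i_z}\xi = \xi \tensor^{\LDERF}_{\Orb_X}\kappa(z)$ in the unbounded derived category of quasi-coherent complexes; this follows from the fact that $i_z$, though not necessarily of finite presentation, is flat, so that $\LDERF \QCPBK{i_z}$ coincides with the exact extension of $i_z^*$ on $\MOD(X)$, exactly as recorded in the conventions at the end of the introduction. Everything else is formal.
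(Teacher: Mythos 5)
Your route is the same as the paper's: factor any morphism $z\colon \spec k \to X$ (with $k$ algebraically closed) through the residue field of its image point so that everything reduces to the case $X=\spec\kappa$ with $\kappa$ a field, and then note that a morphism in $\DCAT(\kappa)$ vanishes if and only if it vanishes after the extension $\kappa \to \bar{\kappa}$. The paper compresses this into ``reduce to $X=\spec\kappa$; the field case is obvious''; your formality-plus-faithful-flatness treatment of the field case is a correct expansion of that ``obvious'' step.

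One genuine correction, and it lands exactly on the step you flag as ``the only point requiring care'': the morphism $i_z\colon \spec\kappa(z)\to X$ is a monomorphism but is \emph{not} flat in general, since $\kappa(z)$ is a quotient (not a localization) of $\Orb_{X,z}$ --- consider $\spec\F_p \to \spec\Z$. So you cannot invoke the convention that for flat $f$ the functor $\LDERF\QCPBK{f}$ is the exact extension of $f^*$, and the parenthetical ``$i_p$ flat'' in your reverse implication is likewise false. The identification $\LDERF\QCPBK{i_z}\xi \simeq \xi\tensor^{\LDERF}_{\Orb_X}\kappa(z)$ that you need is still true, but for a different reason: compute $\LDERF\QCPBK{i_z}$ with a K-flat resolution; pulling a sheaf back along $i_z$ is ``take the stalk at $z$ and tensor over $\Orb_{X,z}$ with $\kappa(z)$'', and the stalk of a K-flat complex is K-flat over $\Orb_{X,z}$, so this computes precisely $\xi\tensor^{\LDERF}_{\Orb_X}\kappa(z)$. (Equivalently, factor $i_z$ as $\spec\kappa(z)\to\spec\Orb_{X,z}\to X$, where the second map is flat and the first is affine with derived pullback $(-)\tensor^{\LDERF}_{\Orb_{X,z}}\kappa(z)$.) With that justification substituted, the rest of your argument --- functoriality of derived pullback for the composition through $\kappa(p)$, and faithfully flat descent of vanishing on cohomology over a field --- goes through as written.
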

\begin{proof}
  We immediately reduce to the situation where $X=\spec \kappa$ and $\kappa$
  is a field. It now suffices to prove that if $\kappa \subseteq k$ is a
  field extension, where $k$ is algebraically closed, then $\xi\tensor
  k$ is the zero map in $\DCAT(k)$ if and only if $\xi$ is the zero
  map in $\DCAT(\kappa)$. This is obvious. 
\end{proof}
If $K \in \DQCOH(X)$, then the
\emph{cohomological support} of $K$ is defined to be the subset
\[
\supph(K) = \cup_{n\in \Z} \supp(\COHO{n}(K)) \subseteq |X|.
\]
For the basic properties of cohomological support, see
\cite[Lem.~3.5]{perfect_complexes_stacks}, which extends
\cite[Lem.~3.3]{MR1436741} to algebraic stacks. The main result of
this section is the following theorem.
\begin{theorem}[Tensor nilpotence with parameters]\label{T:tens_nilp}
  Let $X$ be a quasi-compact algebaic stack with quasi-finite and
  separated diagonal. Let $\phi \colon E \to F$ be a morphism in
  $\DQCOH(X)$, where $E \in \DQCOH(X)^c$. Let $K\in \PERF(X)$. If
  $\phi$ vanishes at the points of $\supph(K)$, then there exists a positive integer $n$ such that $K\tensor^{\LDERF}_{\Orb_X}(\psi^{\tensor n})=0$ in $\DQCOH(X)$. 
\end{theorem}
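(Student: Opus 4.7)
The plan is to follow the strategy of Thomason \cite[Thm.~3.8]{MR1436741}, adapted for algebraic stacks. The key geometric input will be a local structure theorem allowing us to reduce to the case of a finite quotient of an affine scheme, where the crux of the argument takes place. I should also note a typo in the theorem statement: the map $\psi$ is $\phi$, so we seek $n > 0$ with $K\ltensor_{\Orb_X}\phi^{\tensor n} = 0$.

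First, I would reduce to the noetherian setting by a standard limit argument: since $E$ is compact in $\DQCOH(X)$ and $X$ is quasi-compact with quasi-finite separated diagonal, we may write $X$ as a limit of such stacks of finite presentation over $\spec \Z$, and descend $E$, $F$, $\phi$, $K$ to one of them. Compact objects and the vanishing hypothesis are preserved under this descent.

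Next, I would perform noetherian induction on the reduced closed substack $Z \subseteq X$ supporting $K$, i.e., the subscheme defined by the radical of $\supph(K)$. The base case $Z = \emptyset$ means $K = 0$ and there is nothing to prove. For the inductive step, it suffices to produce a nonempty open substack $U \subseteq Z$ and an integer $n_0$ such that $(K\ltensor_{\Orb_X}\phi^{\tensor n_0})|_U = 0$; then the cohomological support of $K\ltensor_{\Orb_X}\phi^{\tensor n_0}$ is strictly smaller than $Z$, so the inductive hypothesis applied to $K\ltensor_{\Orb_X}\phi^{\tensor n_0}$ together with $\phi$ produces some further $n_1$ with $(K\ltensor_{\Orb_X}\phi^{\tensor n_0})\ltensor_{\Orb_X}\phi^{\tensor n_1}=0$, and we take $n = n_0 + n_1$.

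To produce the open $U$, I would use the local structure of stacks with quasi-finite and separated diagonal: étale-locally near any point of $Z$, the stack $X$ has the form $[\spec A / G]$ with $G$ a finite group scheme (this uses Rydh's local structure theorem, via the quasi-affineness of the diagonal coming from Zariski's main theorem for stacks). Replacing $X$ by such an étale chart restricted to a neighbourhood of the generic point of a component of $Z$, the question reduces to the quotient case $[\spec A / G]$. Over this local model, I would apply Lemma \ref{L:finite_splitting}, which is designed precisely to handle finite (possibly non-tame) group quotients by providing a finite faithfully flat cover along which the problem reduces to the affine scheme $\spec A$; there, the vanishing hypothesis combined with Thomason's original tensor nilpotence theorem \cite[Thm.~3.8]{MR1436741} yields nilpotence of $K\ltensor\phi^{\tensor n}$.

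The main obstacle will be the non-tame case, where $|G|$ is divisible by the residue characteristic: here one cannot simply average over $G$, and the naive descent of nilpotence from $\spec A$ back to $[\spec A/G]$ loses information. This is exactly the role of Lemma \ref{L:finite_splitting} and the appendix on tame stacks: they supply a replacement for averaging based on splitting properties of finite linearly reductive covers, ensuring that tensor nilpotence propagates through the quotient morphism $\spec A \to [\spec A / G]$ even in positive characteristic.
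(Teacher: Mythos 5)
The central gap is the local-to-global step. You reduce to an \'etale chart (of the form $[\spec A/G]$) near a chosen point of $Z$ and prove nilpotence there, but pullback along an \'etale --- even finite \'etale --- surjection does \emph{not} reflect vanishing of a morphism in $\DQCOH$: Example \ref{E:tensor_nontame} exhibits a map $\phi$ on $B_{\F_2}(\Z/2\Z)$ whose pullback along $\spec \F_2 \to B_{\F_2}(\Z/2\Z)$ is zero while no tensor power of $\phi$ vanishes. So knowing that $K\ltensor_{\Orb_X}\phi^{\tensor n}$ dies after pullback to the chart gives you nothing on $X$ itself, and the noetherian induction never gets started. (Relatedly, ``the cohomological support of $K\ltensor_{\Orb_X}\phi^{\tensor n_0}$'' is not defined --- it is a morphism, not an object --- so the inductive step would in any case require manufacturing a new perfect parameter complex supported on the smaller closed subset.) This descent problem is exactly what the paper's proof is organized around: the induction principle (Theorem \ref{T:induction_qf_diag}) only ever requires descending the conclusion along (i) finite flat covers by affine schemes, which is Lemma \ref{L:finite_splitting}, and (ii) \'etale morphisms that are isomorphisms outside an open substack on which the conclusion is already known, which is the Mayer--Vietoris/excision argument of Lemma \ref{L:tensor_nilp_local} (at the cost of doubling the exponent). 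Your proposal has an analogue of (i) but no mechanism playing the role of (ii), and that mechanism is the technical heart of the proof.

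Two further corrections. First, you misattribute the content of Lemma \ref{L:finite_splitting}: it has nothing to do with tameness or linearly reductive covers. It holds for an arbitrary finite, faithfully flat, finitely presented $v\colon V\to W$ with $V$ affine, and the mechanism is that any compact object of $\DQCOH(W)$ is a direct summand of $\RDERF\QCPSH{v}P$ with $P$ perfect (Lemma \ref{L:compactbg}), so the trace map $\RDERF\QCPSH{v}v^{\times}M\to M$ splits; this is precisely where the hypothesis $E\in\DQCOH(X)^c$ (rather than merely $E\in\PERF(X)$) is used, and it is designed to work in the non-tame case. Second, the preliminary reduction to stacks of finite presentation over $\spec\Z$ is not innocuous: a point of $\supph(K_\lambda)$ on an approximating stack $X_\lambda$ need not lift to $X$, so the pointwise vanishing hypothesis does not obviously descend to finite level. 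The paper avoids any such approximation at the stack level, invoking the known affine scheme case (\cite[Lem.~3.14]{MR1436741}, \cite[Lem.~1.2]{MR1174255}) only at the end of the d\'evissage.
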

The following example demonstrates that Theorem \ref{T:tens_nilp}
cannot be weakened to the situation where $E \in \PERF(X)$.
\begin{example}\label{E:tensor_nontame}
  Let $X=B_{\F_2}(\Z/2\Z)$. That $X$ is a quasi-compact, non-tame
  Deligne--Mumford stack with finite diagonal. Consider the adjunction morphism
  $\eta\colon \Orb_X
  \to x_*\Orb_{\F_2}$, where $x \colon \spec \F_2 \to X$ is the usual
  cover. Then there is a natural map $\phi\colon C \to \Orb_X[1]$, where
  $C=\coker(\eta)$. Clearly, $\phi$ vanishes at the points of $|X|$ (because $x^*\eta$ is
  split). If $\phi^{\tensor n}=0$ for some $n$, then it is easily
  determined that this implies that $\Orb_X \in \DQCOH(X)^c$, which is
  false. 
\end{example}

\begin{proof}[Proof of Theorem \ref{T:tens_nilp}]
    Let $\mathbf{E}$ be the category of representable, quasi-finite,
  flat and separated morphisms of finite presentation over
  $X$. Let $\mathbf{D} \subseteq \mathbf{E}$ be the full subcategory
  with objects those $(U \xrightarrow{p} X)$ such that there exists an
  integer $n>0$ with $p^*(K \tensor_{\Orb_X}^{\LDERF} (\psi^{\tensor n})) = 0$. It suffices to prove that 
  $\mathbf{D} = \mathbf{E}$. By the induction
  principle (Theorem \ref{T:induction_qf_diag}), it is sufficient to
  verify the following three conditions:
  \begin{enumerate}
  \item[(I1)] if $(U \to W) \in \mathbf{E}$ is an open immersion and $W \in
    \mathbf{D}$, then $U \in \mathbf{D}$;
  \item[(I2)] if $(V \to W) \in \mathbf{E}$ is finite and surjective,
    where $V$ is an affine scheme, then $W \in \mathbf{D}$; and
  \item[(I3)] if $(U \xrightarrow{i} W)$, $(W'\xrightarrow{f} W) \in
    \mathbf{E}$, where $i$ is an open immersion and $f$ is \'etale and
    an isomorphism over $W\setminus U$, then $W \in \mathbf{D}$
    whenever $U$, $W'\in\mathbf{D}$.
  \end{enumerate}
  Now condition (I1) is trivial and condition (I3) is Lemma
  \ref{L:tensor_nilp_local}. For condition (I2), by Lemma
  \ref{L:finite_splitting}, it
  remains to prove that every affine scheme belongs to $\mathbf{D}$.
  By Lemma \ref{L:vanishing_wd} and \cite[Lem.~3.14]{MR1436741} (or \cite[Lem.~1.2]{MR1174255}), the
  result follows.
\end{proof}
\begin{lemma}\label{L:tensor_nilp_local}
  Consider a $2$-cartesian diagram of algebraic stacks
  \[
  \xymatrix{U' \ar@{^(->}[r]^{i'} \ar[d]_{f_U} & W' \ar[d]^f \\ U \ar@{^(->}[r]^i & W,}
  \]
  where $W$ is quasi-compact and quasi-separated, $i$ is a
  quasi-compact open immersion and $f$ is representable, \'etale, finitely presented and
an isomorphism over $X\setminus U$.
  Let $\psi \colon E \to F$ be a morphism in
  $\DQCOH(X)$ and let $K \in \DQCOH(X)$. For each integer $n> 0$, let
  $\phi_n = K \tensor^{\LDERF}_{\Orb_X} (\psi^{\tensor n})$. If $f^*\phi_n = 0$ and $i^*\phi_n = 0$, then $\phi_{2n} = 0$.
\end{lemma}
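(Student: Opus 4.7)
The plan is to exploit the Mayer--Vietoris long exact sequence associated to the Nisnevich distinguished square in the statement. Because $i$ is a quasi-compact open immersion and $f$ is representable, \'etale, of finite presentation, and an isomorphism over $W \setminus U$, for every $M \in \DQCOH(W)$ the canonical map $M \to Ri_*i^*M \oplus Rf_*f^*M$ has cofiber naturally identified with $Rg_*g^*M$, where $g = i \circ f_U = f \circ i'$; this gives a functorial distinguished triangle in $\DQCOH(W)$. Applying $\Hom_W(A, -)$ with $A = K \otimes^{\LDERF}_{\Orb_X} E^{\otimes n}$ and $B = K \otimes^{\LDERF}_{\Orb_X} F^{\otimes n}$, and invoking the standard derived adjunctions, yields a long exact sequence
\[
\cdots \to \Hom^{-1}_U(i^*A, i^*B) \oplus \Hom^{-1}_{W'}(f^*A, f^*B) \xrightarrow{\delta} \Hom^{-1}_{U'}(g^*A, g^*B) \xrightarrow{\partial} \Hom_W(A, B) \to \cdots.
\]

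Since $(i^*, f^*)\phi_n = 0$ by hypothesis, exactness supplies $\tilde\alpha \in \Hom^{-1}_{U'}(g^*A, g^*B)$ with $\partial \tilde\alpha = \phi_n$. The formation of this long exact sequence is natural in both variables and compatible with tensoring by any morphism $\xi \colon C \to D$ in $\DQCOH(W)$ (via the projection formula for $g$). Consequently, one has a Leibniz-type identity $\partial(\tilde\alpha \otimes^{\LDERF}_{\Orb_X} g^*\xi) = (\partial \tilde\alpha) \otimes^{\LDERF}_{\Orb_X} \xi = \phi_n \otimes^{\LDERF}_{\Orb_X} \xi$. Specializing to $\xi = \psi^{\otimes n}$ and using $\phi_n \otimes^{\LDERF}_{\Orb_X} \psi^{\otimes n} = K \otimes^{\LDERF}_{\Orb_X} \psi^{\otimes 2n} = \phi_{2n}$, we obtain $\phi_{2n} = \partial(\tilde\alpha \otimes^{\LDERF}_{\Orb_X} g^*\psi^{\otimes n})$.

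It then suffices to show $\tilde\alpha \otimes^{\LDERF}_{\Orb_X} g^*\psi^{\otimes n}$ lies in $\ker \partial = \mathrm{im}\, \delta$. The crucial observation is that $g^*\phi_n = f_U^*(i^*\phi_n) = 0$: fixing chain-level null-homotopies $h_U$ of $i^*\phi_n$ and $h_{W'}$ of $f^*\phi_n$ in a dg enhancement of $\DQCOH$, the class $\tilde\alpha$ is represented by the cocycle $f_U^* h_U - i'^* h_{W'}$. Tensoring with $i^*\psi^{\otimes n}$ and $f^*\psi^{\otimes n}$ respectively yields chain-level null-homotopies of $i^*\phi_{2n}$ and $f^*\phi_{2n}$ (both of which vanish in $\DQCOH$), and their pullback-difference on $U'$ represents $\tilde\alpha \otimes^{\LDERF}_{\Orb_X} g^*\psi^{\otimes n}$, exhibiting it in the image of $\delta$; hence $\phi_{2n} = 0$. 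The main obstacle is this final chain-level bookkeeping: the tensored null-homotopies are not individually closed in an arbitrary dg model, so one must either work in a model where $i^*\phi_{2n}$ and $f^*\phi_{2n}$ vanish strictly, or reformulate intrinsically via the octahedral axiom applied to the factorization $\phi_n \otimes \psi^{\otimes n} = (\mathrm{id}_B \otimes \psi^{\otimes n}) \circ (\phi_n \otimes \mathrm{id}_{E^{\otimes n}})$ together with the naturality of the Mayer--Vietoris boundary.
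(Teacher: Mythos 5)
Your first half coincides with the paper's argument: you invoke the Mayer--Vietoris distinguished triangle for the \'etale neighbourhood (as in \cite[Lem.~5.6(1)]{perfect_complexes_stacks}), apply $\Hom(E_n,-)$ with $E_n = K\tensor^{\LDERF}_{\Orb_X}E^{\tensor n}$, $F_n = K\tensor^{\LDERF}_{\Orb_X}F^{\tensor n}$, and use the vanishing of $i^*\phi_n$ and $f^*\phi_n$ to produce a class $\tilde\alpha$ (the paper's $t\colon E_n \to \RDERF k_*k^*F_n[-1]$, $k=f\circ i'$) whose boundary is $\phi_n$. The divergence, and the gap, is in how you conclude. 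You aim to show that $\tilde\alpha\tensor g^*\psi^{\tensor n}$ lies in $\ker\partial = \im\delta$ by choosing chain-level null-homotopies $h_U$, $h_{W'}$ in a dg enhancement, asserting that $\tilde\alpha$ is represented by $f_U^*h_U - i'^*h_{W'}$, and tensoring these homotopies with $\psi^{\tensor n}$. This does not go through as written: $\tilde\alpha$ was only produced by exactness, so it need not equal that particular representative; and, as you yourself note, the tensored homotopies are not closed in an arbitrary model (their differentials are the chain-level representatives of $i^*\phi_{2n}$ and $f^*\phi_{2n}$, which vanish only up to homotopy), so they do not define classes in $\Hom^{-1}$ on $U$ and $W'$, and the claimed membership in $\im\delta$ is not established. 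Since you explicitly flag this ``final chain-level bookkeeping'' as an obstacle rather than resolving it, the proof is incomplete at its decisive step.

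The missing idea is that no identification of $\tilde\alpha$ with a preferred representative, and no computation of $\partial$ on the tensored class, is needed. Having $\delta(t)=\phi_n$, one factors
\[
\phi_{2n} \;=\; (\delta\tensor\ID{})\circ\bigl(\ID{}\tensor\psi^{\tensor n}\bigr)\circ(t\tensor\ID{}),
\]
where the middle arrow is $\ID{}\tensor\psi^{\tensor n}\colon (\RDERF k_*k^*F_n[-1])\tensor^{\LDERF}_{\Orb_X}E^{\tensor n}\to (\RDERF k_*k^*F_n[-1])\tensor^{\LDERF}_{\Orb_X}F^{\tensor n}$. By the projection formula this map is identified with $\RDERF k_*k^*(F^{\tensor n}\tensor\phi_n[-1])$, which is zero because $k^*\phi_n = i'^*f^*\phi_n = 0$ (your own observation). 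Hence $\phi_{2n}$ factors through the zero map and vanishes, with no dg enhancement, octahedron, or naturality-of-boundary argument required. If you wish to salvage your route instead, you would have to prove the Leibniz-type compatibility and the representability of $\tilde\alpha$ by the difference of homotopies in a fixed enhancement; this is considerably more delicate than the factorization above and is not needed.
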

\begin{proof}
  To simplify notation, we let $E_n = K\tensor_{\Orb_X}^{\LDERF}
  E^{\tensor n}$ and $F_n = K\tensor_{\Orb_X}^{\LDERF} F^{\tensor n}$. We will argue similarly to \cite[Thm.~3.6]{MR1436741}, but
  using the Mayer--Vietoris triangle for \'etale neighbourhoods of
  stacks developed in \cite[Lem.~5.6(1)]{perfect_complexes_stacks}
  instead of \cite[Lem.~3.5]{MR1436741}. 

  Let $k=f\circ i'$. By \cite[Lem.~5.6(1)]{perfect_complexes_stacks}, there is a
  distinguished triangle in $\DQCOH(X)$:
  \[
  \xymatrix{F_n \ar[r] & \RDERF i_*i^*F_n \oplus \RDERF f_*f^*F_n
    \ar[r] & \RDERF k_* k^* F_n \ar[r]^-{d} & F_n[1]. }
  \]
  By applying the homological functor $\Hom_{\Orb_X}(E_n, -)$ to the
  distinguished triangle above, we find that there exists a morphism
  $t \colon E_n \to \RDERF k_* k^* F_n[-1]$ such that $\delta(t) =
  \phi_n$, where $\delta$ is the boundary map induced by $d$. But
  there is a commutative diagram
  \[
  \xymatrix{ & (\RDERF k_* k^* F_n[-1]) \tensor_{\Orb_X}^{\LDERF} E^{\tensor n} \ar[dd]_(0.3){\ID{}\tensor \psi^{\tensor n}} \ar[dr]^{\delta\tensor \psi^{\tensor n}}  & \\
    E_n \tensor_{\Orb_X}^{\LDERF} E^{\tensor n} \ar[ur]^{t\tensor
      \ID{}} \ar[dr]^{t\tensor \psi^{\tensor n}}
    \ar[rr]^(.3){\phi_{2n}} & & F_n \tensor_{\Orb_X}^{\LDERF} F^{\tensor
      n}\\ & (\RDERF k_* k^* F_n[-1])
    \tensor_{\Orb_X}^{\LDERF} F^{\tensor n} \ar[ur]_{\delta\tensor
      \ID{}}, & }
  \]
  so it remains to prove that the vertical map above is zero. To see this, the
  projection formula \cite[Cor.~4.18]{perfect_complexes_stacks}
  implies that we have a commutative diagram
  \[
  \xymatrix{(\RDERF k_* k^* F_n[-1]) \tensor_{\Orb_X}^{\LDERF}
    E^{\tensor n} \ar[r]^-{\sim} \ar[d]_{\ID{}\tensor \psi^{\tensor
        n}} & \RDERF k_*k^*(K \tensor_{\Orb_X}^{\LDERF} F^{\tensor n}
    \tensor_{\Orb_X}^{\LDERF} E^{\tensor n}[-1]) \ar[d]_{\RDERF
      k_*k^*(F^{\tensor n} \tensor \phi_n[-1])} \\ (\RDERF
    k_* k^* F_n[-1]) \tensor_{\Orb_X}^{\LDERF} F^{\tensor
      n} \ar[r]^-{\sim} & \RDERF k_*k^*(K
    \tensor_{\Orb_X}^{\LDERF} F^{\tensor n} \tensor_{\Orb_X}^{\LDERF}
    F^{\tensor n}[-1]). }
  \]
  Since $k^*\phi_n = 0$, the result follows.
\end{proof}
The following lemma is similar to a special case of \cite[Thm.~7.3 \&
Cor.~9.6]{MR2830235}. Also, see \cite[Proof of Prop.~7.6]{MR2570954} and \cite[Lem.~3.8]{MR2927050}. 
\begin{lemma}\label{L:finite_splitting}
  Let $W$ be an algebraic stack and let $v\colon V \to W$ be a finite
  and faithfully flat morphism of finite presentation, where $V$ is an
  affine scheme. Let $\phi \colon E \to F$ be a morphism in
  $\DQCOH(W)$, where $E \in \DQCOH(W)^c$. Let $K \in \PERF(W)$. If
  $v^*(K\tensor_{\Orb_W}^\LDERF \psi) = 0$ in $\DQCOH(V)$, then $K
  \tensor_{\Orb_X}^{\LDERF} \psi = 0$ in $\DQCOH(W)$. 
\end{lemma}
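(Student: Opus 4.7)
The plan is to use the projection formula to translate the hypothesis into a tensor-vanishing over $W$, and then to deduce $K \ltensor \psi = 0$ by a faithfulness argument, which is where the hypotheses ($E$ compact and $K$ perfect) enter essentially.

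First, I apply the projection formula. Since $v$ is finite, it is representable, affine, and of finite presentation; in particular $\RDERF v_* = v_*$, and the projection formula \cite[Cor.~4.18]{perfect_complexes_stacks} gives a natural isomorphism $v_* v^* N \iso N \ltensor v_*\Orb_V$ for $N \in \DQCOH(W)$. Applied to the morphism $K \ltensor \psi$, together with the hypothesis $v^*(K \ltensor \psi) = 0$, this yields
\[
(K \ltensor \psi) \ltensor v_*\Orb_V = 0 \quad \text{in } \DQCOH(W).
\]

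The crux is to deduce $K \ltensor \psi = 0$, i.e.\ to show that $- \ltensor v_*\Orb_V$ is faithful on $\Hom_{\DQCOH(W)}(K \ltensor E, K \ltensor F)$. Here the hypotheses are used essentially: because $K$ is perfect (hence dualizable, with $K \ltensor -$ biadjoint to $K^\vee \ltensor -$) and $E$ is compact, the source $K \ltensor E$ is itself compact. The desired conclusion then reduces to the general statement that for any $M \in \DQCOH(W)^c$, the pullback $v^*$ is injective on $\Hom_{\DQCOH(W)}(M, -)$.

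The main obstacle is this last faithfulness statement. Example \ref{E:tensor_nontame} shows that without compactness of $M$, $v^*$ is not faithful on morphisms in general, so the argument cannot be purely formal. Following the strategies of \cite[Prop.~7.6]{MR2570954} and \cite[Lem.~3.8]{MR2927050} (compare \cite[Thm.~7.3]{MR2830235}), I expect the proof to combine the trace $v_*\Orb_V \to \Orb_W$ (whose composition with the unit $\Orb_W \to v_*\Orb_V$ is multiplication by $\deg v$) with the compactness of $M$: when $\deg v$ is invertible on $W$ the trace splits the unit, $M$ becomes a retract of $M \ltensor v_*\Orb_V$, and the faithfulness is immediate. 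The general case is handled by reducing to this situation using the fine structure of $\DQCOH(W)^c$ under the running hypothesis on the diagonal, together with a cohomological descent along $v$ in which compactness of $M$ controls convergence and rules out higher-page obstructions.
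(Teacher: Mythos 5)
Your opening reduction is sound and is in fact the same pivot the paper uses: tensoring with $v_*\Orb_V$ (equivalently, by the projection formula, applying $v^*$) and observing that $K\ltensor E$ is compact reduces the lemma to showing that a morphism out of a compact object of $\DQCOH(W)$ which vanishes after $v^*$ is already zero. The genuine gap is that you give no argument for this faithfulness statement in the only case that is actually hard, namely when $\deg v$ is not invertible on $W$ (e.g.\ $W=B_{\F_p}(\Z/p\Z)$ with $v$ the standard cover). The classical trace $v_*\Orb_V\to\Orb_W$ composed with the unit is multiplication by $\deg v$, so it splits nothing in the modular case; and there is no ``reduction to the invertible-degree situation''---for $W=B_{\F_p}(\Z/p\Z)$ every finite flat cover by an affine scheme has degree divisible by $p$, so the modular case cannot be avoided. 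The concluding appeal to ``cohomological descent along $v$ in which compactness of $M$ controls convergence and rules out higher-page obstructions'' is not an argument: no such mechanism is identified, and Example \ref{E:tensor_nontame} shows that some genuinely structural input about compact objects is required, not a convergence statement.

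The missing ingredient is supplied in the paper by relative duality together with Lemma \ref{L:compactbg}. By \cite[Prop.~3.3]{perfect_complexes_stacks}, $\RDERF\QCPSH{v}$ admits a right adjoint $v^\times$ with $v^\times(M)\simeq v^\times(\Orb_W)\ltensor\LDERF\QCPBK{v}M$, so the hypothesis $v^*(K\ltensor\psi)=0$ gives $v^\times(K\ltensor\psi)=0$; by adjunction the composite $\RDERF\QCPSH{v}v^\times(K\ltensor E)\to K\ltensor E\to K\ltensor F$ vanishes, and it suffices to split the counit (the duality trace, not the degree trace) $\RDERF\QCPSH{v}v^\times(M)\to M$ for $M\in\DQCOH(W)^c$. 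This splitting owes nothing to invertibility of $\deg v$: Lemma \ref{L:compactbg} (which needs no diagonal hypothesis beyond the existence of the cover $v$) shows every compact $M$ is a direct summand of $\RDERF\QCPSH{v}P$ with $P\in\PERF(V)$, and on objects of the form $\RDERF\QCPSH{v}P$ the counit splits by the triangle identity for the $(\RDERF\QCPSH{v},v^\times)$ adjunction. It is exactly this description of compacts as retracts of pushforwards from $V$---not a trace divided by the degree, and not a descent spectral sequence---that makes the modular case go through, and it is absent from your proposal.
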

\begin{proof}
  By \cite[Prop.~3.3]{perfect_complexes_stacks}, $\RDERF \QCPSH{v}$
  admits a right adjoint $v^\times$ and there is a functorial isomorphism $v^\times(\Orb_W)
  \tensor_{\Orb_V}^{\LDERF} \LDERF \QCPBK{v}(M) \simeq v^\times(M)$
  for every $M \in \DQCOH(W)$. In particular, if
  $v^*(K\tensor_{\Orb_W}^\LDERF \psi) = 0$ in $\DQCOH(V)$, then
  $v^\times(K \tensor_{\Orb_W}^{\LDERF} \psi) = 0$ in $\DQCOH(V)$. By
  adjunction, it follows that the induced composition
  \[
  \RDERF \QCPSH{v}v^\times(K \tensor_{\Orb_W}^{\LDERF} E) \to K
  \tensor_{\Orb_W}^{\LDERF} E \to K \tensor_{\Orb_W}^\LDERF F
  \]
  vanishes in $\DQCOH(W)$. Thus it suffices to prove that $  \RDERF
  \QCPSH{v} v^\times(K \tensor_{\Orb_W}^{\LDERF} E) \to K
  \tensor_{\Orb_W}^{\LDERF} E$ admits a section. Since $E \in
  \DQCOH(W)^c$ and $K \in \PERF(W)$, it follows that $K
  \tensor_{\Orb_W}^{\LDERF} E \in \DQCOH(W)^c$. Hence, we need only
  prove that if $M \in \DQCOH(W)^c$, then the trace morphism
  $\mathrm{Tr}_M \colon \RDERF
  \QCPSH{v} v^\times(M) \to M$ admits a section. By Lemma
  \ref{L:compactbg}, $M$ is quasi-isomorphic to a direct summand of
  $\RDERF\QCPSH{v}P$, where $P \in \PERF(V)$. Thus we are reduced to
  proving that $\mathrm{Tr}_{\RDERF\QCPSH{v}P}$ admits a section.
  This is trivial and the result follows.
\end{proof}
\section{The classification of thick $\tensor$-ideals}
If $\mathcal{T}$ is a $\tensor$-triangulated category and $S \subseteq
\mathcal{T}$ is a subset, then define $\tensthck{S} \subseteq
\mathcal{T}$ to be the smallest thick $\tensor$-ideal of $\mathcal{T}$
containing $S$. 

In order to prove Theorem \ref{T:thomason_classification}, we require the following lemma, which is analagous to \cite[Lem.~3.14]{MR1436741}.
\begin{lemma}\label{L:supports}
  Let $X$ be a quasi-compact algebraic stack with quasi-finite and separated diagonal. If $P$, $Q \in \DQCOH(X)^c$ and
  $\supph(P) \subseteq \supph(Q)$, then $\tensthck{P} \subseteq
  \tensthck{Q}$.
\end{lemma}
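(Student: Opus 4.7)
My plan is to adapt Thomason's strategy from \cite[Lem.~3.14]{MR1436741}, applying Theorem \ref{T:tens_nilp} to a morphism built from the duality data of $Q$. Since $Q \in \DQCOH(X)^c$ is compact and hence perfect, $Q$ is dualizable; let $\eta_Q\colon \Orb_X \to Q \tensor Q^\vee$ be the coevaluation and consider the cofiber triangle
\[ \Orb_X \xrightarrow{\eta_Q} Q \tensor Q^\vee \to C \xrightarrow{\delta} \Orb_X[1]. \]
I would first verify that the boundary $\delta$ vanishes at every geometric point $x$ of $\supph(Q) \supseteq \supph(P)$. Indeed, $\eta_Q|_x$ sends $1 \in \kappa(x)$ to $\ID{Q_x}$, which is a nonzero element of the graded $\kappa(x)$-vector space $\End^\bullet(Q_x)$; since the derived category of a field is semisimple, this inclusion is split, forcing $\delta|_x = 0$ in $\DCAT(\kappa(x))$.

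The shifted map $\delta[-1]\colon C[-1] \to \Orb_X$ has cofiber $Q \tensor Q^\vee \in \tensthck{Q}$, but its source may fail to be compact (since $\Orb_X$ need not be compact on non-tame stacks). To remedy this, I would tensor by $P$ to form $\phi := \delta[-1] \tensor P\colon C[-1] \tensor P \to P$. The source $C[-1] \tensor P$ is compact, because $C \tensor P$ fits in a distinguished triangle with the compact objects $P$ and $Q \tensor Q^\vee \tensor P$. The map $\phi$ still vanishes at every point of $\supph(P)$, so Theorem \ref{T:tens_nilp} applied with $K = P$ yields an integer $n > 0$ such that $P \tensor^{\LDERF} \phi^{\tensor n} = 0$ in $\DQCOH(X)$. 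The cofiber of $\phi^{\tensor n}$ is iteratively built from $\mathrm{cofib}(\phi) = Q \tensor Q^\vee \tensor P \in \tensthck{Q}$ and thus lies in $\tensthck{Q}$; the standard zero-middle-map splitting then exhibits $P^{\tensor (n+1)}$ as a direct summand of $P \tensor \mathrm{cofib}(\phi^{\tensor n})$, whence $P^{\tensor(n+1)} \in \tensthck{Q}$.

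The last step is to descend from $P^{\tensor(n+1)} \in \tensthck{Q}$ back to $P \in \tensthck{Q}$. The triangular identity for the dualizable object $P$ exhibits $P$ as a retract of $P \tensor P^\vee \tensor P$; iterating, $P$ is a retract of $P^{\tensor (n+1)} \tensor (P^\vee)^{\tensor n}$. Since $\tensthck{Q}$ is closed under tensor with compact objects, this yields the conclusion, provided $P^\vee$ is compact. This compactness is the main obstacle: it holds whenever $X$ is tame (and, more generally, I expect it in the setting of quasi-compact stacks with quasi-finite separated diagonal, e.g.\ by a direct check on classifying stacks $BG$ where duals of compacts are contragredient representations). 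Should this direct route falter, one could alternatively invoke the induction principle (Theorem \ref{T:induction_qf_diag}), emulating the structure of the proof of Theorem \ref{T:tens_nilp}, to reduce to the case of an affine scheme where Thomason's original argument applies verbatim.
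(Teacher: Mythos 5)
Your construction and first two steps are sound and follow exactly the route the paper intends (Thomason's argument for \cite[Lem.~3.14]{MR1436741}, with Theorem \ref{T:tens_nilp} replacing \cite[Thm.~3.8]{MR1436741}, and with the correct observation that one must tensor the map $\delta[-1]$ by $P$ to make its source compact before the theorem applies). The genuine gap is the last step. You arrive at $P^{\tensor(n+1)} \in \tensthck{Q}$ and then need to descend to $P$, which you do by writing $P$ as a retract of $P^{\tensor(n+1)}\tensor (P^\vee)^{\tensor n}$ and tensoring the \emph{ideal membership} by $(P^\vee)^{\tensor n}$; this requires $P^\vee \in \DQCOH(X)^c$ (or that thick $\tensor$-ideals of $\DQCOH(X)^c$ absorb tensoring by all of $\PERF(X)$), which you only justify when $X$ is tame. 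But the lemma is needed precisely in the non-tame case (in the tame case $\DQCOH(X)^c=\PERF(X)$ and Theorem \ref{T:thomason_classification} is far less delicate), so as written the proof does not establish the statement in the stated generality. The suggested fallback via the induction principle does not repair this: membership in a thick $\tensor$-ideal is not a condition one can verify on a finite flat cover or glue along a Nisnevich square without proving descent statements of essentially the same difficulty as the lemma itself (that descent, at the level of tensor-nilpotence of morphisms, is exactly what Lemma \ref{L:finite_splitting} and Theorem \ref{T:tens_nilp} already package).

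The fix is to perform the retract trick at the level of \emph{morphisms} rather than ideal membership, where no compactness is needed. Your application of Theorem \ref{T:tens_nilp} gives $\ID{P^{\tensor(n+1)}}\tensor \delta[-1]^{\tensor n}=0$ in $\DQCOH(X)$. Since $P$ is dualizable in $\DQCOH(X)$, the triangular identities exhibit $P$ as a retract of $P^{\tensor(n+1)}\tensor(P^\vee)^{\tensor n}$ there; tensoring the zero morphism above with $\ID{(P^\vee)^{\tensor n}}$ and pre/post-composing with the retraction maps (tensored with the identity of $C[-1]^{\tensor n}$) yields $\ID{P}\tensor\delta[-1]^{\tensor n}=0$, i.e.\ $P\tensor^{\LDERF}\delta[-1]^{\tensor n}=0$, with no hypothesis on $P^\vee$. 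Now the usual splitting shows $P$ itself (not a tensor power) is a direct summand of $P\tensor \mathrm{cofib}(\delta[-1]^{\tensor n})$, and your filtration argument shows this object lies in $\tensthck{Q}$: each graded piece is of the form (compact)$\,\tensor\, Q\tensor Q^\vee$, the compactness coming from the $P$-factor. Hence $P\in\tensthck{Q}$ for every quasi-compact $X$ with quasi-finite and separated diagonal, which is what the paper's one-line proof (``argue exactly as in Thomason'') amounts to once this morphism-level reduction is made explicit.
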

\begin{proof}
  Argue exactly as in \cite[Lem.~3.14]{MR1436741}
  (cf. \cite[Lem.~1.2]{MR1174255}), but this time using Theorem \ref{T:tens_nilp} instead of \cite[Thm.~3.8]{MR1436741}.
\end{proof}
The following example shows Lemma \ref{L:supports} cannot be extended
to $P$, $Q \in \PERF(X)$ when $X$ is non-tame. It also shows that
Thomason's Classification (Theorem \ref{T:thomason_classification})
does not hold for $\PERF(X)$ in this case too.
\begin{example}\label{E:nontame}
  Let $x\colon \spec \F_2 \to X$ be as in Example
  \ref{E:tensor_nontame}. Let $P = \Orb_X$ and
  let $Q = x_*\Orb_{\spec \F_2}$. Then $P$, $Q \in \PERF(X)$ and $\supph(P) =
  \supph(Q)$. Note that $Q \in \DQCOH(X)^c$ and $P \notin
  \DQCOH(X)^c$. Since $\DQCOH(X)^c$ is a thick $\tensor$-ideal of
  $\PERF(X)$, it follows that $\tensthck{Q} \subseteq
  \DQCOH(X)^c$. But if $\tensthck{P} = \tensthck{Q}$, then $P \in
  \DQCOH(X)^c$. But $P\notin \DQCOH(X)^c$, thus we have a contradiction.
\end{example}
Following Thomason \cite[Thm.~3.15]{MR1436741} (or Neeman \cite[Thm.~1.5]{MR1174255}), given Lemma \ref{L:supports}, we can prove Theorem \ref{T:thomason_classification}.
\begin{proof}[Proof of Theorem \ref{T:thomason_classification}]
  If $Y \subseteq |X|$ is a Thomason subset, then define
  \[
  \mathcal{I}_Y = \{ P \in \DQCOH(X)^c \suchthat \supph(P) \subseteq Y\}.
  \]
  Clearly, $\mathcal{I}_Y$ is a thick $\tensor$-ideal of $\DQCOH(X)^c$. If
  $\mathcal{T}$ is a thick $\tensor$-ideal of $\DQCOH(X)^c$, then define
  \[
  \varphi(\mathcal{T}) = \cup_{Q \in \mathcal{T}} \supph(Q).
  \]
  By \cite[Lem.~3.5(3)]{perfect_complexes_stacks},
  $\varphi(\mathcal{T})$ is a Thomason subset of $|X|$. It suffices to
  prove that $\mathcal{I}_{\varphi(\mathcal{T})} = \mathcal{T}$ and
  $\varphi(\mathcal{I}_Y) = Y$.

  Obviously, $\mathcal{T} \subseteq
  \mathcal{I}_{\varphi(\mathcal{T})}$. For the reverse inclusion, if
  $P \in \mathcal{I}_{\varphi(\mathcal{T})}$, then $\supph(P)
  \subseteq \cup_{Q \in \mathcal{T}} \supph(Q)$. Since $\supph(P)$ and
  $\supph(Q)$ are constructible for every $Q\in \mathcal{T}$, it
  follows that there is a finite subset $J \subseteq \mathcal{T}$ such
  that
  \[
  \supph(P) \subseteq \cup_{Q \in J} \supph(Q) =
  \supph(\oplus_{Q \in J} Q).
  \]
  By Lemma \ref{L:supports}, $\tensthck{P} \subseteq
  \tensthck{\oplus_{Q\in J} Q} \subseteq \mathcal{T}$. Thus $P \in
  \mathcal{T}$ and $\mathcal{I}_{\varphi(\mathcal{T})} = \mathcal{T}$.

  Obviously, $Y \supseteq \varphi(\mathcal{I}_Y)$. Since $Y$ is
  Thomason, it is expressible as a union $\cup_\alpha Y_\alpha$ such
  that $|X|\setminus Y_\alpha$ is quasi-compact and open. By
  \cite[Thm.~A]{perfect_complexes_stacks}, for every $\alpha$ there is a
  compact complex $Q_\alpha$ with support $Y_\alpha$. It follows that
  if $y\in Y$, then $y\in \supph(Q_\alpha) \subseteq Y$ for some
  $\alpha$. That is, $y\in \varphi(\mathcal{I}_Y)$ so $Y = \varphi(\mathcal{I}_Y)$.
\end{proof}
\section{The Balmer spectrum of a tame stack}
We will prove Theorem \ref{T:balmer_tame} using \cite[Prop.~6.1]{MR2280286}.
\begin{proof}[Proof of Theorem \ref{T:balmer_tame}]
  Let $s\colon
  (|X|,\supph) \to (|\SPB(\PERF(X))|,\sigma_X)$ be the uniquely induced morphism of support data, where $\sigma_X$
  denotes the universal support datum. By Theorem
  \ref{T:thomason_classification}, $(|X|,\supph)$ is classifying and
  by \cite[Cor.~5.6.1~\&~5.7.2]{MR1771927} we know that $|X|$ is
  spectral. By \cite[Prop.~6.1]{MR2280286}, $s$ is a homeomorphism. By
  definition, $\Orb_{\SPB(\PERF(X))}$ is the sheafification of the
  presheaf
  \[
  (i\colon U \subseteq X) \mapsto \End_{\PERF(X)/\ker (i^*) \cap \PERF(X)}(i^*\Orb_X).
  \]
  Since $|X|$ has a basis consisting of quasi-compact open subsets, it
  is sufficient to identify $\End_{\PERF(X)/\ker(i^*)\cap
    \PERF(X)}(i^*\Orb_X)$ when $i$ is a quasi-compact open
  immersion. By \cite[Lem.~6.7(2)]{perfect_complexes_stacks},
  $\ker(i^*)$ is the localizing envelope of a set of
  objects with compact image in $\DQCOH(X)$. By Thomason's
  Localization Theorem (e.g.,
  \cite[Thm.~4.14]{perfect_complexes_stacks} or
  \cite[Thm.~2.1]{MR1191736}), $\PERF(U)$ is the thick closure of
  $\PERF(X)/\ker (i^*) \cap \PERF(X)$. Since there are natural isomorphisms
  \[
  \End_{\PERF(X)/\ker (i^*) \cap \PERF(X)}(i^*\Orb_X)
  \cong \End_{\PERF(U)}(\Orb_U) \cong\End_{\Orb_U}(\Orb_U) = \Gamma(U,\Orb_X),
  \]
  the result follows.
\end{proof}
% Before we prove Theorem \ref{T:balmer_tame_coarse}, we require the following simple lemma.
% \begin{lemma}\label{L:zar_rings}
%   Let $X$ be an algebraic stack and let $p\colon W \to X$ be a faithfully flat morphism that is locally of finite presentation, where $W$ is a scheme. Let $W_1 = W\times_X W$, which is an algebraic space, and let $R \to W_1$ be an \'etale presentation by a scheme. Then $(|X|,\Orb_{X_{\mathrm{Zar}}})$ is the quotient of $W$ by $R$ in the category of locally ringed spaces. 
% \end{lemma}
% \begin{proof}
%   The induced morphism of topological spaces $|p| \colon |W| \to |X|$ is submersive because $p$ is faithfully flat and locally of finite presentation.  Since $R \to W_1$ and $|W_1| \to |W|\times_{|X|} |W|$ are surjective \cite[Prop.~5.4(iv)]{MR1771927}, it follows that the induced morphism $|W|/|R| \to |X|$ is a homeomorphism. By flat descent, there is also an exact sequence of lisse-\'etale sheaves
%   \[
% \xymatrix{0 \ar[r] & \Orb_{X} \ar[r] & p_*\Orb_W \ar[r] & q_*\Orb_R, }
%   \]
%   where $q\colon R \to X$ is the induced morphism. It follows that if $i \colon U \subseteq X$ is an open immersion, then
%   \begin{align*}
%     \End_{\Orb_U}(\Orb_U) &= \Hom_{\Orb_X}(i_!\Orb_U,\Orb_X)\\
%     &= \Hom_{\Orb_X}(i_!\Orb_U,\ker(p_*\Orb_W \to q_*\Orb_R))\\
%     &= \ker( \Hom_{\Orb_X}(i_!\Orb_U,p_*\Orb_W) \to \Hom_{\Orb_X}(i_!\Orb_U,q_*\Orb_R))\\
%     &= \ker((p_*\Orb_W)(U) \to (q_*\Orb_R)(U)).
%   \end{align*}
%   The result follows.
% \end{proof}
\begin{proof}[Proof of Theorem \ref{T:balmer_tame_coarse}]
  Since $X$ has finite inertia, it has separated diagonal. By
  \cite[Thm.~6.12]{MR3084720}, $\pi$ is a separated universal
  homeomorphism, so $X_{\cms}$ is a quasi-compact and quasi-separated
  algebraic space. By \cite[Thm.~6.12]{MR3084720}, the natural map
  $(|X|,\Orb_{X_{\mathrm{Zar}}}) \to
  (|X_{\cms}|,\Orb_{(X_{\cms})_{\mathrm{Zar}}})$ is an isomorphism of
  locally ringed spaces. By Theorem
  \ref{T:balmer_tame}, the result follows. 
\end{proof} 
\appendix
\section{Tame stacks and coarse spaces}\label{A:tamestacks} In this
appendix, we establish some basic results about $\RDERF \QCPSH{\pi}$,
where $\pi \colon X \to X_{\cms}$ is the coarse space of a
quasi-separated algebraic stack $X$ with finite inertia. Our first result,
however, is a useful lemma that characterises the compact objects on a
certain class of algebraic stacks, which includes $BG$ for all finite
groups $G$. This is likely known, though we are unaware of a reference
for this result in the generality required.
\begin{lemma}\label{L:compactbg}
  Let $W$ be an algebraic stack and let $v\colon V \to W$ be a finite
  and faithfully flat morphism of finite presentation, where $V$ is an
  affine scheme. If $M \in \DQCOH(W)^c$, then $M$ is quasi-isomorphic
  to a direct summand of $\RDERF \QCPSH{v}P$ for some $P\in \PERF(V)$. 
\end{lemma}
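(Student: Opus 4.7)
The plan is to exploit the adjunction $(\RDERF\QCPSH{v}, v^\times)$ together with the finite local freeness of $\QCPSH{v}\Orb_V$, letting the right adjoint $v^\times$ do most of the work.

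First, I would set up the functorial preliminaries. Since $v$ is affine, $\RDERF\QCPSH{v} = \QCPSH{v}$ is exact on $\DQCOH$, and $\QCPSH{v}\Orb_V$ is a locally free $\Orb_W$-module of finite rank, hence perfect on $W$. By \cite[Prop.~3.3]{perfect_complexes_stacks}, the functor $\RDERF\QCPSH{v}$ admits a right adjoint $v^\times$ and there is a projection formula $v^\times(M) \simeq v^\times(\Orb_W) \ltensor \LDERF\QCPBK{v}(M)$. The object $v^\times(\Orb_W)$ is the $\Orb_V$-linear dual of $\QCPSH{v}\Orb_V$ (via its $\Orb_V$-algebra structure), and is therefore perfect on $V$; consequently $v^\times$ preserves coproducts and compact objects.

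Given $M \in \DQCOH(W)^c$, the natural candidate is $P := v^\times(M)$. Because $v^\times$ preserves compact objects and $\DQCOH(V)^c = \PERF(V)$ (Neeman's theorem, using that $V$ is affine), one gets $P \in \PERF(V)$. The counit of the adjunction provides a trace morphism $\mathrm{Tr}_M \colon \RDERF\QCPSH{v} v^\times(M) \to M$, and exhibiting a section $s \colon M \to \RDERF\QCPSH{v} P$ with $\mathrm{Tr}_M \circ s = \mathrm{id}_M$ realises $M$ as a direct summand of $\RDERF\QCPSH{v} P$, which is the conclusion.

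The main obstacle is constructing the section. When $M$ already lies in the essential image of $\RDERF\QCPSH{v}$, say $M = \RDERF\QCPSH{v}(N)$, the triangle identity $\mathrm{Tr}_{\RDERF\QCPSH{v}N} \circ \RDERF\QCPSH{v}(\eta_N) = \mathrm{id}_{\RDERF\QCPSH{v}N}$ supplies a canonical section, where $\eta_N$ is the unit of the adjunction. For arbitrary compact $M$ I would transfer this splitting by faithfully flat descent: because $v$ is finite locally free, the projection formula together with the conservativity of $\LDERF\QCPBK{v}$ reduces the construction of the section to the affine scheme $V$, where the explicit trace pairing coming from the $\Orb_V$-algebra structure of $\QCPSH{v}\Orb_V$ produces the requisite splitting.
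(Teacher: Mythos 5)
Your reduction of the statement to producing a section of the trace map $\mathrm{Tr}_M \colon \RDERF\QCPSH{v} v^\times(M) \to M$ is reasonable, but the step where you actually construct that section is a genuine gap, and it is exactly the hard point. A section does not descend along a faithfully flat morphism: conservativity of $\LDERF\QCPBK{v}$ detects isomorphisms (or vanishing), not the existence of retractions, and split epimorphisms in a triangulated category are not a flat-local notion. The paper's Example \ref{E:tensor_nontame} already exhibits the failure: on $X=B_{\F_2}(\Z/2\Z)$ with $x\colon \spec \F_2 \to X$, the unit $\Orb_X \to x_*\Orb_{\F_2}$ splits after applying $x^*$ but not on $X$. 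Worse, your construction of the section never uses compactness of $M$ (compactness enters only to make $P=v^\times(M)$ perfect), so if the descent step worked it would apply to $M=\Orb_X$ on that same $X$ with $v=x$: there $v^\times(\Orb_X)$ is a bounded complex of finite-dimensional $\F_2$-vector spaces, hence perfect, and the pulled-back trace does split over the field; your argument would then exhibit $\Orb_X$ as a summand of $x_*P$ with $P$ perfect, forcing $\Orb_X \in \DQCOH(X)^c$, which is false. There is also a circularity relative to the paper's logic: the splitting of $\mathrm{Tr}_M$ for compact $M$ is precisely what Lemma \ref{L:finite_splitting} \emph{deduces from} Lemma \ref{L:compactbg}, so it cannot serve as the engine of its proof without an independent argument. (A secondary inaccuracy: $v^\times(\Orb_W)$, the $\Orb_W$-linear dual of $\QCPSH{v}\Orb_V$ viewed on $V$, need not be perfect on $V$ --- this already fails for $\spec$ of a finite non-Gorenstein $k$-algebra over $\spec k$ --- so $v^\times$ need not preserve compacts; this particular point is repairable, since a compact summand of $\RDERF\QCPSH{v}L$ is a summand of $\RDERF\QCPSH{v}P$ for some perfect $P$ by writing $L$ as a homotopy colimit of perfect complexes on the affine scheme $V$, but as written it is another unjustified claim.)

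The missing idea is compact generation. The paper proves the lemma by showing that the full subcategory $\mathcal{T}\subseteq \DQCOH(W)^c$ of objects that are summands of $\RDERF\QCPSH{v}P$ with $P\in\PERF(V)$ is thick and closed under cones --- the nontrivial point, handled by using the adjunction to lift a morphism $\RDERF\QCPSH{v}P' \to \RDERF\QCPSH{v}P$ to a morphism $P' \to v^\times\RDERF\QCPSH{v}P$ on $V$, forming the cone there, and pushing the triangle forward (only the $\Orb_W$-dual of $\QCPSH{v}\Orb_V$, which is locally free on $W$, is needed) --- and then invoking the facts that $\DQCOH(W)$ is compactly generated by $\QCPSH{v}\Orb_V$ and that, by the Neeman--Thomason theorem, $\DQCOH(W)^c$ is the smallest thick subcategory containing this generator. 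Some generation input of this kind is unavoidable: without it nothing relates an abstract compact object of $\DQCOH(W)$ to pushforwards from $V$, and that is exactly what your proposal lacks.
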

\begin{proof}
  If $P \in \PERF(V)$, then $\RDERF\QCPSH{v}P \in \DQCOH(W)^c$
  \cite[Prop.~3.3 \& Ex.~4.8]{perfect_complexes_stacks}. Thus, let
  $\mathcal{T} \subseteq \DQCOH(W)^c$ be the subcategory with objects
  those $N \in \DQCOH(W)^c$ that are quasi-isomorphic to direct summands of $\RDERF\QCPSH{v}P$ for
  some $P\in \PERF(V)$. Clearly, $\mathcal{T}$ is closed under shifts
  and direct summands. We now prove that $\mathcal{T}$ is
  triangulated. Thus let $f\colon N' \to N$ be a morphism in
  $\mathcal{T}$ and complete it to a distinguished triangle 
  \[
  \xymatrix{N' \ar[r]^f & N \ar[r]^c & N'' \ar[r]^{\partial} & N'[1].  }
  \]
  We now prove that $N''\in \mathcal{T}$. By assumption, there are $P$, $P' \in
  \PERF(V)$ and $C$, $C' \in \DQCOH(W)^c$ and quasi-isomorphisms
  $N\oplus C \simeq \RDERF\QCPSH{v}P$, $N'\oplus C' \simeq \RDERF\QCPSH{v}P'$. It follows that
  there is a distinguished triangle
  \[
  \xymatrix{N' \oplus C' \ar[r]^{f\oplus 0} & N \oplus C
    \ar[rr]^-{c\oplus \mathrm{id}_C \oplus 0} & & N'' \oplus C
    \oplus C'[1] \ar[r]^-{\partial \oplus p_{C'[1]}} & N'\oplus C'[1],}
  \]
  where $p_{C'[1]} \colon C\oplus C'[1] \to C'[1]$ is the natural
  projection. In particular, we are reduced to the situation where
  $N'=\RDERF\QCPSH{v}P'$ and $N=\RDERF\QCPSH{v}P$. In this case, the morphism $f\colon N' \to
  N$ by duality induces a morphism $\tilde{f} \colon P' \to v^\times
  \RDERF\QCPSH{v}P$. It follows that the composition $\RDERF\QCPSH{v}P' \xrightarrow{f} \RDERF\QCPSH{v}P
  \to \RDERF\QCPSH{v}v^\times \RDERF \QCPSH{v}P$ is the map $\RDERF\QCPSH{v}\tilde{f}$. Now form a
  distinguished triangle
  \[
  \xymatrix{ P' \ar[r]^-{\tilde{f}} & v^\times \RDERF\QCPSH{v}P \ar[r]^-k & K \ar[r]^-{\delta}
  & P'[1].}
  \]
  Since the morphism $\RDERF\QCPSH{v}P
  \to \RDERF\QCPSH{v}v^\times \RDERF \QCPSH{v}P$ admits a retraction,
  there exists a $Q\in\DQCOH(W)^c$ and a quasi-isomorphism
  $\RDERF\QCPSH{v}v^\times \RDERF \QCPSH{v}P \simeq \RDERF\QCPSH{v}P
  \oplus Q$. There is an induced morphism of distinguished triangles
  \[
  \xymatrix@C3pc{\RDERF\QCPSH{v}P' \ar@{=}[d] \ar[r]^-{\RDERF \QCPSH{v}\tilde{f}} &
    \RDERF\QCPSH{v}v^\times \RDERF \QCPSH{v}P \ar[r]^-{\RDERF \QCPSH{v}k} \ar[d]_\sim & \RDERF
    \QCPSH{v}K \ar[d] \ar[r]^-{\RDERF \QCPSH{v}\delta} &
    \RDERF\QCPSH{v}P'[1] \ar[d] \\ \RDERF\QCPSH{v}P' \ar[r]^{f \oplus 0} & \RDERF \QCPSH{v}P \oplus Q
  \ar[r]^{c\oplus \mathrm{id}_Q} & N'' \oplus Q \ar[r]^-{\partial + 0} & \RDERF\QCPSH{v}P'[1].}
  \]
  It follows that $\RDERF \QCPSH{v}K \simeq N'' \oplus Q$ and so $N''
  \in \mathcal{T}$. 
  By \cite[Ex.~6.5 \& Prop.~6.6]{perfect_complexes_stacks},
  $\DQCOH(W)$ is compactly generated by $v_*\Orb_V$. But Thomason's
  Theorem \cite[Thm.~2.1]{MR1191736} implies that $\DQCOH(W)^c$
  is the smallest thick subcategory containing $v_*\Orb_V$. The result
  follows.  
\end{proof}
The following result was suggested to us by David Rydh. 
\begin{theorem}\label{T:texact_compacts}
If $X$ be a quasi-separated algebraic stack with finite inertia and
coarse space $\pi \colon X \to X_{\cms}$, then the restriction of
$\RDERF \QCPSH{\pi}$ to $\DQCOH(X)^c$ is $t$-exact.  
\end{theorem}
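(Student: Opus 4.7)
The plan is a local-global reduction followed by an application of Lemma~\ref{L:compactbg} on an affine presentation, combined with a truncation trick. First, $\RDERF \QCPSH{\pi}$ is automatically left $t$-exact because $\pi_*$ is left exact; so the content is right $t$-exactness on compacts, namely that $M \in \DQCOH(X)^c \cap \DQCOH(X)^{\leq 0}$ implies $\RDERF \QCPSH{\pi} M \in \DQCOH(X_{\cms})^{\leq 0}$. This condition is \'etale-local on $X_{\cms}$: by flat base change for $\RDERF \QCPSH{\pi}$ along an \'etale morphism $U \to X_{\cms}$, together with the preservation of compact objects under representable \'etale pullback in $\DQCOH$, it suffices to prove the statement after replacing $X_{\cms}$ by an affine \'etale cover.

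Next, I would invoke the local structure of coarse moduli spaces (Keel--Mori \cite{MR1432041} as refined by Rydh \cite{MR3084720}): \'etale-locally on $X_{\cms}$, one may arrange $X_{\cms} = \spec B$ and $X = [\spec A/G]$ for a finite flat group scheme $G$ with $A^G = B$, so that the atlas $v\colon V = \spec A \to X$ is a $G$-torsor, in particular finite, faithfully flat, and affine. Consequently $\RDERF \QCPSH{v} = v_*$ is exact, and the composite $\pi \circ v \colon V \to X_{\cms}$ is a morphism of affine schemes, hence affine, so $\RDERF \QCPSH{\pi v} = (\pi v)_*$ is $t$-exact.

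Now apply Lemma~\ref{L:compactbg}: $M$ is quasi-isomorphic to a direct summand of $v_* P$ for some $P \in \PERF(V)$. Exactness of $v_*$ gives
\[
v_*(\tau^{\leq 0} P) \simeq \tau^{\leq 0}(v_* P) \simeq M \oplus \tau^{\leq 0}(M'),
\]
where $M'$ is the complementary summand and I use that $M$ already lies in $\leq 0$. Applying $\RDERF \QCPSH{\pi}$ and using $t$-exactness of $(\pi v)_*$, it follows that $\RDERF \QCPSH{\pi} M$ is a direct summand of $(\pi v)_*(\tau^{\leq 0} P) \in \DQCOH(X_{\cms})^{\leq 0}$, which proves the claim.

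The main obstacle I anticipate is the local structure reduction: verifying that, in the stated generality (finite inertia, with no tameness hypothesis), one indeed has an \'etale-local presentation $X = [\spec A/G]$ in which $v$ is finite faithfully flat with $\spec A$ affine, so that Lemma~\ref{L:compactbg} applies without modification. Granted this input, the truncation step is formal, but it uses crucially that $v$ is affine, so that $v_*$ is exact and commutes with $\tau^{\leq 0}$; otherwise one cannot transfer the $t$-bound on $M$ to one on $P$, and the direct-summand argument collapses.
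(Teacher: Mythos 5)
Your final truncation-and-summand step is fine and is essentially the paper's argument (the paper phrases it dually, showing $\tau^{>0}\RDERF\QCPSH{\omega}M\simeq 0$ via a retraction, but both hinge on Lemma \ref{L:compactbg} plus affineness of $v$ and $\pi\circ v$), and the initial \'etale-local reduction on $X_{\cms}$ is also how the paper begins. The genuine gap is exactly the input you flagged: the claim that, \'etale-locally on $X_{\cms}$, one can write $X=[\spec A/G]$ with $G$ a finite flat group scheme and $V=\spec A\to X$ a finite faithfully flat atlas. That local quotient presentation is the Abramovich--Olsson--Vistoli structure theorem \cite{MR2427954}, and it is proved there only under their (strong) tameness hypothesis, with finite presentation over a base; the classical analogue for separated Deligne--Mumford stacks also does not cover the present situation. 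Theorem \ref{T:texact_compacts} assumes only quasi-separatedness and finite inertia --- no tameness, no Deligne--Mumford hypothesis, no finiteness over a base --- and in that generality no such \'etale-local $[\spec A/G]$ presentation is available; indeed the whole point of Appendix A and B of the paper is to get by without it. So as written your proof rests on an unproved (and, in this generality, unknown) structural statement.

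What the paper does instead is weaker but sufficient: after reducing to $X_{\cms}$ affine, Theorem \ref{T:qfdiag_pres} produces $V\xrightarrow{v}W\xrightarrow{p}X$ with $V$ affine, $v$ finite, faithfully flat and finitely presented (no group action, no torsor structure), and $p$ a separated, finitely presented \emph{Nisnevich} covering; by \cite[Prop.~6.5, Thm.~6.10]{MR3084720} one may take $p$ fixed-point reflecting, so that $W_{\cms}\to X_{\cms}$ is \'etale and the square relating $\pi$ and $\omega\colon W\to W_{\cms}$ is cartesian, which is what legitimizes replacing $X$ by $W$. Only then does Lemma \ref{L:compactbg} apply, and the truncation argument you give finishes the proof. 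So the fix for your proposal is not a routine verification but precisely this d\'evissage: replace ``$X=[\spec A/G]$ \'etale-locally on $X_{\cms}$'' by ``after a fixed-point reflecting Nisnevich covering of $X$, compatible with coarse spaces, the stack admits a finite flat finitely presented cover by an affine scheme,'' which is Theorem \ref{T:qfdiag_pres} combined with Rydh's results.
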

\begin{proof}
  By \cite[Lem.~1.2(4)]{perfect_complexes_stacks}, this may be checked \'etale-locally on $X_{\cms}$. Thus, we may assume that $X_{\cms}$ is an affine scheme. Since $\pi$ is a universal homeomorphism, it follows that $X$ is quasi-compact. Also, since $X$ has finite inertia, it has quasi-finite and separated diagonal. By Theorem \ref{T:qfdiag_pres}, there exist morphisms of algebraic stacks $V \xrightarrow{v} W \xrightarrow{p} X$, such that $V$ is an affine scheme, $v$ is finite faithfully flat and finitely represented and $p$ is a representable, separated and finitely presented Nisnevich covering. By \cite[Prop.~6.5]{MR3084720}, we may further assume that $p$ is fixed-point reflecting. We now apply \cite[Thm.~6.10]{MR3084720} to conclude that the following diagram
\[
 \xymatrix{W \ar[r]^p \ar[d]_{\omega} & X \ar[d]^\pi \\ W_{\cms} \ar[r]^{p_{\cms}} & X_{\cms}}
\]
is cartesian and $p_{\cms}$ is representable, separated, \'etale and of finite presentation. Thus, it suffices to prove the result on $W$. 

Let $M \in \DQCOH(W)^c \cap \DQCOH^{\leq 0}(W)$. By Lemma
\ref{L:compactbg}, we may assume that there is map $i\colon M \to
\RDERF \QCPSH{v}P$, where $P\in \PERF(V)$, that admits a retraction
$r$. It follows that the composition $M \xrightarrow{i}
\RDERF\QCPSH{v}P \to \trunc{>0}\RDERF \QCPSH{v}P$ is the zero
map. Thus the induced map $\RDERF\QCPSH{\omega}M \to \RDERF\QCPSH{\omega}\trunc{>0}\RDERF
\QCPSH{v}P$ is the $0$ map. But $v$ and $\omega\circ v$ are affine, so
there is a natural quasi-isomorphism
$\trunc{>0}\RDERF\QCPSH{\omega}\RDERF\QCPSH{v}P \simeq \RDERF\QCPSH{\omega}\trunc{>0}\RDERF
\QCPSH{v}P$. The resulting map $\trunc{>0}\RDERF\QCPSH{\omega}M \to
\trunc{>0}\RDERF\QCPSH{\omega}\RDERF\QCPSH{v}P$ is $0$ and also
coincides with $\trunc{>0}\RDERF\QCPSH{\omega}(i)$, which admits a
retraction $\trunc{>0}\RDERF\QCPSH{\omega}(r)$. In particular,
$\trunc{>0}\RDERF\QCPSH{\omega}M \simeq 0$ and the result follows.
\end{proof}
In \cite{MR2427954}, they work with a more restrictive definition of
tame, rendering the following corollary a tautology. Indeed, they assume that $X$ has finite inertia and is locally of finite presentation over a base scheme $S$ and that $\pi \colon X \to X_{\cms}$ is such that $\pi_*$ is exact on quasi-coherent sheaves. In our case, we make none of these assumptions, thus it is
non-trivial. 
\begin{corollary}\label{C:tameaov}
  Let $X$ be a quasi-separated algebraic stack with finite inertia
  and coarse space $\pi\colon X \to X_{\cms}$. The following are equivalent:
  \begin{enumerate}
  \item \label{L:tameaov:tame}$X$ is tame,
  \item \label{L:tameaov:ex} $\pi_* \colon \QCOH(X) \to \QCOH(X_{\cms})$ is exact,
  \item \label{L:tameaov:tex-below} $\RDERF \pi_* \colon \DQCOH^+(X)
    \to \DQCOH^+(X_{\cms})$ is $t$-exact, and
  \item \label{L:tameaov:tex-ubd} $\RDERF \QCPSH{\pi} \colon \DQCOH(X)
    \to \DQCOH(X_{\cms})$ is $t$-exact.
  \end{enumerate}
\end{corollary}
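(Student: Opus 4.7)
The plan is to prove the four-way equivalence via the easy implications $(4) \Rightarrow (3) \Rightarrow (2)$ and $(2) \Rightarrow (3)$ (which amount to restriction and a standard spectral-sequence argument), combined with the substantive equivalence $(2) \Leftrightarrow (1)$ and implication $(1) \Rightarrow (4)$; the last two both reduce, by \'etale-locality on $X_{\cms}$, to the local quotient model provided by the local structure theorem for algebraic stacks with finite inertia.

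For the easy directions: $(4) \Rightarrow (3)$ holds because $\RDERF \QCPSH{\pi}$ agrees with $\RDERF \pi_*$ on $\DQCOH^+(X)$, and $t$-exactness is inherited by restriction. $(3) \Rightarrow (2)$ follows by applying $\RDERF \pi_*$ to a sheaf $\mathcal{F} \in \QCOH(X)$: $t$-exactness forces $R^i \pi_* \mathcal{F} = 0$ for $i > 0$, and the usual long exact sequence then yields exactness of $\pi_*$. Conversely, for $(2) \Rightarrow (3)$, vanishing of $R^i \pi_*$ for $i > 0$ makes the hypercohomology spectral sequence
\[
E_2^{p,q} = R^p \pi_* \COHO{q}(M) \Rightarrow \COHO{p+q}(\RDERF \pi_* M)
\]
collapse for $M \in \DQCOH^+(X)$, giving $\COHO{n}(\RDERF \pi_* M) \cong \pi_* \COHO{n}(M)$ and hence $t$-exactness on bounded-below complexes.

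For $(2) \Leftrightarrow (1)$ and $(1) \Rightarrow (4)$: both tameness and exactness of $\pi_*$ are \'etale-local on $X_{\cms}$, the latter by \cite[Lem.~1.2(4)]{perfect_complexes_stacks} and the former because geometric stabilizers are preserved under \'etale base change. By the local structure theorem for algebraic stacks with finite inertia (Keel--Mori and its refinements, cf.~\cite{MR3084720}), there is, around any geometric point $x$ of $X$, an \'etale neighbourhood of $\pi(x)$ over which $X \simeq [\spec B/G_x]$ with $G_x$ the finite stabilizer at $x$; in this model $\pi_*$ is the $G_x$-invariants functor on $B$-$G_x$-modules. Exactness of $(-)^{G_x}$ is equivalent to linear reductivity of $G_x$, which is tameness at $x$; this yields $(2) \Leftrightarrow (1)$. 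For $(1) \Rightarrow (4)$, linear reductivity of $G_x$ means $(-)^{G_x}$ is exact with vanishing higher derived functors, so in the local model $\RDERF \QCPSH{\pi}$ identifies with its underived zeroth cohomology on the entire unbounded derived category and is manifestly $t$-exact. I expect the main obstacle to be the careful application of the local structure theorem in our level of generality (no finite presentation hypothesis) and the identification of $\RDERF \QCPSH{\pi}$ with derived $G_x$-invariants in the local model; an alternative route to $(1) \Rightarrow (4)$ uses Theorem~\ref{T:texact_compacts} together with the fact that for tame $X$ the compact and perfect objects coincide and compactly generate $\DQCOH(X)$, extending $t$-exactness from compacts to the entire derived category.
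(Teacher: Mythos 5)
Your skeleton of easy implications is fine, but the load-bearing steps rest on a local structure theorem that is not available at this level of generality. Both directions of your $(1)\Leftrightarrow(2)$ and your main route to $(1)\Rightarrow(4)$ invoke an \'etale-local presentation $X\simeq[\spec B/G_x]$ over $X_{\cms}$ with $G_x$ the geometric stabilizer, identifying $\pi_*$ with $G_x$-invariants. That presentation is the main theorem of Abramovich--Olsson--Vistoli \cite[Thm.~3.2]{MR2427954}, proved only for \emph{tame} stacks which are locally of finite presentation over a base; the whole point of the corollary (as flagged in the paragraph preceding it) is that no finite presentation hypothesis is assumed, so even for $(1)\Rightarrow(4)$ you are citing a result beyond the stated generality. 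Worse, in the direction $(2)\Rightarrow(1)$ you would apply the presentation to a stack not yet known to be tame: for a general finite-inertia stack (stabilizers may be infinitesimal, e.g.\ $\pmb{\alpha}_p$) no quotient presentation by the stabilizer is known, and AOV's proof of it uses linear reductivity, so this direction is essentially circular. What Keel--Mori/Rydh actually provides, and what the paper uses via Theorem \ref{T:qfdiag_pres}, is only a finite, faithfully flat, finitely presented cover $v\colon V\to X$ with $V$ affine, \'etale-locally on $X_{\cms}$ --- not a quotient structure. The paper's route is different: $(2)\Rightarrow(1)$ is delegated to \cite[Thm.~C]{hallj_dary_alg_groups_classifying}, and $(1)\Rightarrow(2)$ is deduced from Theorem \ref{T:texact_compacts}: tameness gives $\Orb_X\in\DQCOH(X)^c$, $t$-exactness of $\RDERF\QCPSH{\pi}$ on compacts forces the unit $\Orb_X\to v_*\Orb_V$ to split, and then every surjection in $\QCOH(X)$ is a retract of one pushed forward along the affine morphism $\pi\circ v$, so $\pi_*$ is exact.

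Two further steps are thinner than you make them. For $(2)\Rightarrow(3)$, exactness of $\pi_*$ on $\QCOH(X)$ does not formally give vanishing of the $R^i\pi_*$ appearing in your spectral sequence: those are derived functors on the lisse-\'etale topos, not a priori the derived functors of $\pi_*|_{\QCOH(X)}$, and identifying them requires either an effaceability argument (embed $\shv{F}\hookrightarrow v_*v^*\shv{F}$ with $\pi\circ v$ affine) or the comparison result the paper cites, \cite[Prop.~2.1]{hallj_neeman_dary_no_compacts}, which uses that $\pi$ has affine diagonal; the paper likewise handles $(3)\Leftrightarrow(4)$ by citation rather than by restriction alone. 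Finally, your fallback for $(1)\Rightarrow(4)$ --- ``extend $t$-exactness from compacts to all of $\DQCOH(X)$ by compact generation'' --- is not a valid inference as stated: $t$-exactness does not pass from a set of compact generators to the whole category merely because they generate, and the paper never attempts such an extension; it proves $(2)$ first and only then upgrades to $(3)$ and $(4)$ via the cited equivalences.
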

\begin{proof}
  We begin with some preliminary reductions. The morphism $\pi$ is a
  separated universal homeomorphism \cite[Thm.~6.12]{MR3084720}, so
  $X_{\cms}$ is a quasi-separated algebraic space and $\pi$ is
  quasi-compact and quasi-separated. Thus by
  \cite[Lem.~1.2(2)]{perfect_complexes_stacks},
  \itemref{L:tameaov:tex-below}$\Rightarrow$\itemref{L:tameaov:tex-ubd}
  and by \cite[Thm.~2.6(2)]{perfect_complexes_stacks} we have   \itemref{L:tameaov:tex-ubd}$\Rightarrow$\itemref{L:tameaov:tex-below}. Clearly,
  \itemref{L:tameaov:tame} may be verified after passing to an affine
  \'etale presentation of $X_{\cms}$, and similarly for
  \itemref{L:tameaov:ex} and \itemref{L:tameaov:tex-below}
  \cite[Lem.~1.2(4) \& Lem.~2.2(6)]{perfect_complexes_stacks}. We may
  consequently assume that $X_{\cms}$ is an affine scheme. Since $\pi$ has
  finite diagonal, it has affine diagonal, so we have
  \itemref{L:tameaov:ex}$\Leftrightarrow$\itemref{L:tameaov:tex-below}
  \cite[Prop.~2.1]{hallj_neeman_dary_no_compacts}. By
  \cite[Thm.~C(1)$\Rightarrow$(3)]{hallj_dary_alg_groups_classifying},
  we now obtain
  \itemref{L:tameaov:ex}$\Rightarrow$\itemref{L:tameaov:tame}. It remains to address
  \itemref{L:tameaov:tame}$\Rightarrow$\itemref{L:tameaov:ex}. 

  Arguing exactly as in the proof of Theorem \ref{T:texact_compacts},
  we may further assume that $X$ admits a finite, faithfully flat and
  finitely presented cover $v\colon V \to X$, where $V$ is an affine
  scheme. Since $X$ is tame, $\Orb_X \in \DQCOH(X)^c$. By Theorem
  \ref{T:texact_compacts}, it follows that the induced morphism $\Orb_X \to
  v_*\Orb_V$ admits a retraction. If $M \in \QCOH(X)$, then it follows
  immediately that the natural map $M \to v_*v^*M$ admits a
  retraction. Thus, if $f\colon M \to N$ is a surjection in
  $\QCOH(X)$, then $f$ is a retraction of the surjection $v_*v^*f$.
  Since $\pi\circ v$ is affine, $\pi_*v_*v^*f$ is surjective. In
  particular, $\pi_*f$ is a retraction of a surjection, thus is
  surjective. The result follows. 
\end{proof}
\section{The induction principle}
The \emph{induction principle} \cite[Tag \spref{08GL}]{stacks-project} for algebraic spaces is closely related to the \'etale d\'evissage results of \cite{MR2774654}. When working with derived categories, where locality results are often quite subtle, it is often advantageous to have the strongest possible criteria at your disposal. In this appendix, we will prove the following induction principle for stacks with quasi-finite and separated diagonal. In \cite{etale_dev_add}, this will be generalized to stacks with non-separated diagonals and put into a broader context. 

Before state this result, we require some notation. Fix an algebraic stack $S$. If $P_1$, $\dots$, $P_r$ is a list of properties of morphisms of algebraic stacks to $S$, let $\STACK_{P_1,\dots,P_r/S}$ denote the full $2$-subcategory of the category of algebraic stacks over $S$ whose objects are those $(x\colon X \to S)$ such that $x$ has properties $P_1$, $\dots$, $P_r$. The following abbreviations will be used: \'et (\'etale), qff (quasi-finite flat), sep~(separated), fp (finitely presented) and rep (representable). 
\begin{theorem}[Induction principle]\label{T:induction_qf_diag}
  Let $S$ be a quasi-compact algebraic stack with quasi-compact and separated diagonal. If $S$ has quasi-finite diagonal, let $\mathbf{E}=\STACK_{\mathrm{rep,sep,qff,fp}/S}$; or if $S$ is Deligne--Mumford, let $\mathbf{E}=\STACK_{\mathrm{rep,sep,\acute{e}t,fp}/S}$. Let $\mathbf{D} \subseteq \mathbf{E}$ be a full subcategory satisfying
  the following properties:
  \begin{enumerate}
  \item[(I1)] if $(X' \to X) \in \mathbf{E}$ is an open immersion and $X \in
    \mathbf{D}$, then $X' \in \mathbf{D}$;
  \item[(I2)] if $(X' \to X) \in \mathbf{E}$ is finite and surjective,
    where $X'$ is an affine scheme, then $X \in \mathbf{D}$; and
  \item[(I3)] if $(U \xrightarrow{i} X)$, $(X'\xrightarrow{f} X) \in
    \mathbf{E}$, where $i$ is an open immersion and $f$ is \'etale and an isomorphism
    over $X\setminus U$, then $X \in \mathbf{D}$ whenever $U$,
    $X'\in\mathbf{D}$. 
  \end{enumerate}
  Then $\mathbf{D} = \mathbf{E}$. In particular, $S \in \mathbf{D}$.
\end{theorem}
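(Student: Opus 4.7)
The plan is to imitate the classical induction principle for algebraic spaces (\cite[Tag~\spref{08GL}]{stacks-project}, cf.~\cite{MR2774654}), reducing the stacky setting to the algebraic-space setting via a suitable quasi-finite flat presentation.

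First, it suffices to prove $S\in\mathbf{D}$. For any $x\colon X\to S$ in $\mathbf{E}$, representability and separatedness of $x$ force $X$ to also have a quasi-compact, quasi-finite, separated diagonal. The full subcategory $\mathbf{D}_X\subseteq\STACK_{\mathrm{rep,sep,qff,fp}/X}$ consisting of those $Y\to X$ whose composite $Y\to S$ lies in $\mathbf{D}$ inherits (I1), (I2) and (I3). Hence applying the theorem with $X$ in place of $S$ yields $(X\xrightarrow{\mathrm{id}}X)\in\mathbf{D}_X$, i.e.\ $X\in\mathbf{D}$.

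Next I would invoke a presentation theorem (a variant of Theorem~\ref{T:qfdiag_pres} for stacks with quasi-finite separated diagonal) to produce a factorisation $V\xrightarrow{v}W\xrightarrow{p}S$ in which $V$ is an affine scheme, $v$ is finite, faithfully flat and of finite presentation, $W$ is an algebraic space, and $p$ is a representable, separated, finitely presented Nisnevich covering. Hypothesis (I2) applied directly to $v$ yields $W\in\mathbf{D}$; the remaining task is to propagate membership along $p$.

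For this propagation step, I would use the standard Nisnevich d\'evissage. For a representable, separated, finitely presented, \'etale Nisnevich cover $p\colon W\to S$ with $S$ quasi-compact, one constructs iteratively a finite filtration $S=U_0\supsetneq U_1\supsetneq\cdots\supsetneq U_n=\emptyset$ by quasi-compact opens, together with open substacks $W_i\subseteq W\times_S U_i$ such that $W_i\to U_i$ is in $\mathbf{E}$ and is an isomorphism over $U_i\setminus U_{i+1}$. The construction excises the image of a section of $p$ over the maximal open where one exists, and terminates by upper semicontinuity of the rank of $p$ on the spectral space $|S|$ (\cite[Cor.~5.6.1]{MR1771927}). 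Each $W_i\in\mathbf{D}$ by (I1), and descending induction on $i$, using (I3) for the elementary Nisnevich square $(U_{i+1}\hookrightarrow U_i,\, W_i\to U_i)$, yields $U_i\in\mathbf{D}$; taking $i=0$ gives $S\in\mathbf{D}$. The Deligne--Mumford variant is identical, with the presentation simplified since one may take $W=V$ an \'etale atlas by a scheme, rendering (I2) unnecessary in that step.

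The principal obstacle is verifying that the classical Nisnevich d\'evissage for quasi-compact, separated, \'etale maps of algebraic spaces (which underlies \cite[Tag~\spref{08GL}]{stacks-project}) carries over verbatim to representable such maps of stacks with quasi-finite separated diagonal. Once the upper-semicontinuity and constructibility statements for the rank of $p$ are available in this setting (which they are, since $p$ is representable and the relevant questions are \'etale-local on $S$), the remainder is organisational: one runs the induction along the quasi-compact open stratification dictated by the elementary Nisnevich squares and feeds the output through (I1) and (I3).
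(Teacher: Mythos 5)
Your outline coincides with the paper's proof, which is literally ``combine the presentation theorem (Theorem \ref{T:qfdiag_pres}) with Nisnevich d\'evissage (Lemma \ref{L:nis_dev})'': one applies Theorem \ref{T:qfdiag_pres} to an object $X$ of $\mathbf{E}$ to get $V \xrightarrow{v} W \xrightarrow{p} X$, uses (I2) on $v$ to put $W$ in $\mathbf{D}$, and then runs the induction on the splitting sequence of $p$, consuming (I1) and (I3). Your opening relativization (reducing to the base, or equivalently working in the category of \'etale objects over a fixed $X\in\mathbf{E}$) is indeed needed to make the two ingredients mesh, since objects of $\mathbf{E}$ are only quasi-finite flat, not \'etale, over $S$; the d\'evissage itself is the straightforward induction the paper asserts, and no new semicontinuity input is required because Theorem \ref{T:qfdiag_pres} hands you the splitting sequence.

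Two of your side-claims are false, though only one touches the logic. First, you cannot arrange $W$ to be an algebraic space: for $S=BG$ with $G$ a non-trivial finite group, a representable \'etale surjection $W\to S$ with $W$ an algebraic space corresponds to a free action, and such a $W\to S$ admits no section over the reduced stratum $S$ itself, hence is not a Nisnevich covering. Theorem \ref{T:qfdiag_pres} only gives a stack $W$ (with affine coarse space), and that suffices, since (I2) applies to $v\colon V\to W$ regardless. Second, and more seriously, the Deligne--Mumford shortcut fails: an \'etale atlas $V\to S$ by a scheme is not a Nisnevich covering whenever $S$ has non-trivial stabilizers (same example), so you cannot take $W=V$, and (I2) cannot be dropped --- it is the only hypothesis that ever removes stabilizers. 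Concretely, the full subcategory of $\mathbf{E}$ consisting of the algebraic spaces in $\mathbf{E}$ satisfies (I1) and (I3) but is proper when $S=BG$, so any argument for the Deligne--Mumford case that avoids (I2) proves a false statement. The correct treatment of that case is the same argument, using the refinement in Theorem \ref{T:qfdiag_pres} that $v$ may be taken finite \'etale, so that $V$ and $W$ lie in $\STACK_{\mathrm{rep,sep,\acute{e}t,fp}/S}$.
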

\begin{proof}
  Combine Lemma \ref{L:nis_dev} with Theorem \ref{T:qfdiag_pres}.
\end{proof}
We wish to point out that Theorem \ref{T:induction_qf_diag} relies on the existence of coarse spaces for stacks with finite inertia (i.e., the Keel--Mori Theorem \cite{MR1432041,MR3084720}).
\subsection{Nisnevich coverings}
It will be useful to consider some variants and refinements of \cite[\S\S 7-8]{MR2922391}.

If $p \colon W \to X$ is a representable morphism of algebraic stacks, then a
\emph{splitting sequence} for $p$ is a sequence of
quasi-compact open immersions
\[
\emptyset=X_0 \subseteq X_1 \subseteq \cdots \subseteq X_{r} = X
\]
such that $p$ restricted to $X_{i}\setminus X_{i-1}$, when given the induced reduced structure, admits a section
for each $i=1$, $\dots$, $r$. In this situation, we say that $p$ has a
splitting sequence of length $r$. An \'etale and representable morphism of algebraic stacks $p\colon W \to X$ is a \emph{Nisnevich covering} if it admits a splitting sequence.
\begin{example}\label{ex:nisnevich_scheme}
  Let $X$ be a quasi-compact and quasi-separated scheme. Then there exists an affine scheme $W$ and a Nisnevich covering $p\colon W \to X$. Indeed, taking $W=\amalg_{i=1}^n U_i$, where the $\{U_i\}$ form a finite affine open covering of $X$ gives the claim. 
\end{example}
The following lemma is proved by a straightforward induction on the length of the splitting sequence.
\begin{lemma}[Nisnevich d\'evissage]\label{L:nis_dev}
  Let $S$ be a quasi-compact and quasi-separated algebraic stack. 
  Let $\mathbf{E}$ be $\STACK_{\mathrm{rep,\acute{e}t,fp}/S}$ or $\STACK_{\mathrm{rep,sep,\acute{e}t,fp}/S}$. Let
  $\mathbf{D} \subseteq \mathbf{E}$ be a full $2$-subcategory with the following properties:
  \begin{enumerate}
  \item[(N1)] if $(X' \to X) \in \mathbf{E}$ is an open immersion and
    $X \in \mathbf{D}$, then $X' \in \mathbf{D}$; and
  \item[(N2)] if $(U \xrightarrow{i} X)$, $(X'\xrightarrow{f} X) \in
    \mathbf{E}$, where $i$ is an open immersion and $f$ is an isomorphism
    over $X\setminus U$, then $X \in \mathbf{D}$ whenever $U$,
    $X'\in\mathbf{D}$.
  \end{enumerate}
  If $p \colon W \to X$ is a Nisnevich covering in $\mathbf{E}$ and $W \in
  \mathbf{D}$, then $X \in \mathbf{D}$.
\end{lemma}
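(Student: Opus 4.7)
Plan: Induct on $r$, the length of a splitting sequence $\emptyset = X_0 \subseteq \cdots \subseteq X_r = X$ for $p \colon W \to X$. The base case $r = 0$ forces $X = W = \emptyset$, and $W \in \mathbf{D}$ gives $X \in \mathbf{D}$ immediately.

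For the inductive step, set $U := X_{r-1}$ and let $Z := X \setminus U$ with its reduced closed substack structure. First, the restriction $p^{-1}(U) \to U$ admits $\emptyset = X_0 \subseteq \cdots \subseteq X_{r-1} = U$ as a splitting sequence of length $r - 1$; since $p^{-1}(U)$ is an open substack of $W$, it lies in $\mathbf{D}$ by (N1), and the inductive hypothesis yields $U \in \mathbf{D}$.

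The crux of the argument is to manufacture an open substack $V \subseteq W$ such that $V \to X$ lies in $\mathbf{E}$ and is an isomorphism over $Z$. The given section $s \colon Z \to W \times_X Z$ of the representable \'etale morphism $W \times_X Z \to Z$ is automatically an open immersion, so $s(Z)$ is an open substack of $W \times_X Z$. Since $Z$ is closed in $X$, the fibre product $W \times_X Z$ is closed in $W$, hence its complement $(W \times_X Z) \setminus s(Z)$, closed in $W \times_X Z$, is also closed in $W$. Setting
\[
V := W \setminus \bigl( (W \times_X Z) \setminus s(Z) \bigr),
\]
we obtain an open substack of $W$ with $V \cap (W \times_X Z) = s(Z)$, so that $V \times_X Z \to Z$ is an isomorphism. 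Topological invariance of the \'etale site then upgrades this to an isomorphism over $X \setminus U$ regardless of the scheme structure chosen there.

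Finally, $V \hookrightarrow W$ is an open immersion, so the structure morphism $V \to S$ inherits the properties (representable, \'etale, finitely presented, and separated when applicable) defining $\mathbf{E}$, and (N1) places $V \in \mathbf{D}$. Applying (N2) to $U \hookrightarrow X$ and $V \to X$ concludes $X \in \mathbf{D}$. The principal obstacle is the geometric construction of $V$ from the section $s$: extending a section defined only over a reduced locally closed stratum to an honest open substack of $W$ whose structure morphism is an isomorphism in a neighbourhood of that stratum. Everything else---the induction, the verification that $V \in \mathbf{E}$, and the final application of (N2)---is formal bookkeeping.
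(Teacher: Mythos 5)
Your proof is correct and is essentially the paper's own argument: the paper only states that the lemma follows by a straightforward induction on the length of the splitting sequence, and your induction, with the shrinking $V = W \setminus \bigl(p^{-1}(Z)\setminus s(Z)\bigr)$ and the appeal to (N1), (N2), is the standard way to carry that out. The one detail you leave implicit is that $V = p^{-1}(U)\cup s(Z)$ is quasi-compact, so that $V \to S$ remains of finite presentation and $V$ really lies in $\mathbf{E}$; this holds because $p^{-1}(U)$ is a quasi-compact open (as $U=X_{r-1}$ is a quasi-compact open and $p$ is quasi-compact) and $s(Z)\cong Z$ is closed in the quasi-compact stack $X$.
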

The following lemma will also be useful.
\begin{lemma}\label{L:nis_props}
  Let $p\colon W \to X$ be a Nisnevich covering of algebraic stacks.
  \begin{enumerate}
  \item\label{L:nis_props:pb} If $f \colon X' \to X$ is a morphism of algebraic stacks, then the pull back $p'\colon W' \to X'$ of $p$ along $f$ is a Nisnevich covering.
  \item\label{L:nis_props:comp} Let $w\colon W' \to W$ be a Nisnevich covering of finite presentation. If $p$ is of finite presentation and $X$ is quasi-compact and quasi-separated, then $p\circ w \colon W' \to X$ is a Nisnevich covering.
  \end{enumerate}
\end{lemma}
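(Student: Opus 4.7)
The plan is to prove \itemref{L:nis_props:pb} by a formal base-change argument and then deduce \itemref{L:nis_props:comp} by induction on the length of a splitting sequence for $p$, with the main technical point being a topological extension of opens across a closed stratum.

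For \itemref{L:nis_props:pb}, representability, \'etaleness, and quasi-compactness of open immersions are all preserved by base change, so a splitting sequence $\emptyset = X_0 \subseteq \cdots \subseteq X_r = X$ for $p$ pulls back to a sequence of quasi-compact open immersions $\emptyset = X'_0 \subseteq \cdots \subseteq X'_r = X'$. For each section $s_i \colon (X_i \setminus X_{i-1})_\mathrm{red} \to W$, the base change yields a morphism $(X_i \setminus X_{i-1})_\mathrm{red} \times_X X' \to W'$; precomposing with the closed immersion $(X'_i \setminus X'_{i-1})_\mathrm{red} \hookrightarrow (X_i \setminus X_{i-1})_\mathrm{red} \times_X X'$ (both have the same underlying topological space) produces the required section of $p'$ over $(X'_i \setminus X'_{i-1})_\mathrm{red}$.

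For \itemref{L:nis_props:comp}, I would induct on the length $r$ of a splitting sequence $\emptyset = X_0 \subseteq \cdots \subseteq X_r = X$ for $p$, with sections $s_i \colon (X_i \setminus X_{i-1})_\mathrm{red} \to W$; the case $r = 0$ is vacuous. Assuming inductively that $p \circ w$ restricts to a Nisnevich covering over $X_{r-1}$ with splitting sequence $\emptyset = X'_0 \subseteq \cdots \subseteq X'_N = X_{r-1}$, I would set $C = (X_r \setminus X_{r-1})_\mathrm{red}$ and apply \itemref{L:nis_props:pb} to pull back $w$ along $s_r \colon C \to W$. This yields a Nisnevich covering of $C$ with splitting sequence $\emptyset = Z_0 \subseteq \cdots \subseteq Z_M = C$ and sections $t_m \colon (Z_m \setminus Z_{m-1})_\mathrm{red} \to W' \times_W C$; projecting $t_m$ to $W'$ gives a section of $p \circ w$ over $(Z_m \setminus Z_{m-1})_\mathrm{red}$, by a short computation using that $s_r$ is a section of $p$.

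It then remains to extend each $Z_m \subseteq C$ to a quasi-compact open $V_m \subseteq X_r$ containing $X_{r-1}$ with $V_m \cap C = Z_m$ and $V_m \subseteq V_{m+1}$; I would take $V_m := X_r \setminus (C \setminus Z_m)$ and verify quasi-compactness of the resulting open immersion $V_m \hookrightarrow X_r$. This is the main obstacle of the proof, and is where the finite-presentation and quasi-compactness hypotheses are essential: since $X_{r-1} \hookrightarrow X_r$ is a quasi-compact open immersion into a quasi-compact and quasi-separated stack, the closed substack $C$ is cut out by a finite-type ideal; the same reasoning applied to $Z_m \subseteq C$ shows that $C \setminus Z_m$ is a finitely-presented closed substack of $X_r$, so $V_m$ is indeed quasi-compact open. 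Concatenating $X'_0 \subseteq \cdots \subseteq X'_N = X_{r-1} = V_0 \subseteq V_1 \subseteq \cdots \subseteq V_M = X_r$ then furnishes the required splitting sequence for $p \circ w$.
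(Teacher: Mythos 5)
The paper gives no proof of this lemma at all---it is stated as a routine fact and used in the proof of Theorem \ref{T:qfdiag_pres}---so there is nothing to compare against; what matters is whether your argument is sound, and it is. Part (1) is the evident base-change argument, and your handling of the reduced strata (factoring through the closed immersion $(X'_i\setminus X'_{i-1})_{\mathrm{red}}\hookrightarrow (X_i\setminus X_{i-1})_{\mathrm{red}}\times_X X'$) is exactly right, since only the underlying sets matter. Part (2) by induction on the length of the splitting sequence of $p$, pulling $w$ back along the section $s_r$ over the closed stratum and then extending the resulting opens $Z_m\subseteq C$ to opens $V_m=X\setminus(C\setminus Z_m)$ of $X$, is the natural d\'evissage and works. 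Two small remarks on the quasi-compactness step. First, as written, ``the closed substack $C$ is cut out by a finite-type ideal'' is imprecise: $C$ carries the reduced structure, whose (radical) ideal need not be of finite type; what is true is that $|C|$ is the support of \emph{some} finitely presented closed substack, and running your argument with such a choice (composites of finitely presented closed immersions being finitely presented) does give that $|C\setminus Z_m|$ supports a finitely presented closed substack, whence $V_m\hookrightarrow X$ is quasi-compact---harmless, but worth phrasing correctly. Second, this can be short-circuited: $|V_m|=|X_{r-1}|\cup|Z_m|$ is a union of two quasi-compact subsets, hence quasi-compact, and a quasi-compact open of a quasi-compact, quasi-separated stack is automatically retrocompact, so the immersion $V_m\hookrightarrow X$ is quasi-compact without invoking ideals of finite type. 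With either fix the proof is complete, and as you observe it only uses quasi-compactness and quasi-separatedness of the morphisms involved, which the finite-presentation hypotheses supply.
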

 % \begin{proof}
%   The assertion \itemref{L:nis_props:pb} is trivial. For \itemref{L:nis_props:comp}, set $\mathbf{E} = \STACK_{\mathrm{rep,sep,\acute{e}t,fp}/X}$. Let $\mathbf{D}\subseteq \mathbf{E}$ be the full subcategory with objects those $(V \to X)$ such that $W'\times_X V \to V$ is a Nisnevich covering.  We will use Nisnevich d\'evissage (Lemma \ref{L:nis_dev}) to prove that $X\in \mathbf{D}$. Clearly, $\mathbf{D}$ satisfies (N1). A short argument also shows that $\mathbf{D}$ satisfies (N2). It remains to prove that $(W \to X) \in \mathbf{D}$. To see this, we simply note that $W'\times_X W \to W\times_X W \to W$ is a Nisnevich covering because $W\times_X W \to W$ admits a section and $W'\times_X W \to W\times_X W$ is a Nisnevich covering. 
% \end{proof}
\subsection{Presentations}
The following theorem refines \cite[Thm.~7.2]{MR2774654} and will be crucial for the proof of Theorem \ref{T:induction_qf_diag}.
\begin{theorem}\label{T:qfdiag_pres}
  Let $X$ be a quasi-compact algebraic stack with quasi-finite
  and separated diagonal. Then there exist morphisms of algebraic stacks
  \[
  V \xrightarrow{v} W \xrightarrow{p} X
  \]
  such that
  \begin{itemize}
  \item $V$ is an affine scheme;
  \item $v$ is finite, flat, surjective and of finite presentation; and
  \item $p$ is a separated Nisnevich covering of finite presentation.
  \end{itemize}
  In addition, if $S$ is a Deligne--Mumford stack, it can be arranged that $v$ is also \'etale.
\end{theorem}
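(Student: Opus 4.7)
The plan is to upgrade Rydh's presentation theorem \cite[Thm.~7.2]{MR2774654} from an étale surjection to a Nisnevich covering. Rydh's theorem, applied to our $X$, already provides a representable, separated, étale, finitely presented surjection $W_0 \to X$ together with a finite flat finitely presented cover $V_0 \to W_0$ from an affine scheme; the only content beyond Rydh is the promotion of the étale cover to a Nisnevich covering (and the étale version of the finite flat cover in the Deligne--Mumford case).

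I would first produce the desired presentation locally around each point. For every $x \in |X|$, the goal is to construct a representable, separated, étale, finitely presented morphism $p_x\colon (W_x,w_x) \to (X,x)$ that induces an isomorphism on residual gerbes at $x$, together with a finite, flat, finitely presented cover $v_x\colon V_x \to W_x$ with $V_x$ affine (and étale if $X$ is Deligne--Mumford). Because the diagonal of $X$ is quasi-finite and separated, the stabilizer at $x$ is a finite group scheme over $\kappa(x)$, so the residual gerbe at $x$ is finite over $\spec \kappa(x)$ and by Keel--Mori admits $\spec\kappa(x)$ as its coarse space. Rydh's construction, applied to the strict Henselization of this coarse space, yields $(W_x,V_x)$ with the required properties, and these descend to an étale neighborhood of $x$ in $X$; the Nisnevich property at $x$ is guaranteed by the isomorphism-on-residual-gerbes control.

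Next, I would globalize by quasi-compactness of $|X|$: pick finitely many $x_1,\dots,x_n$ whose images $p_{x_i}(|W_{x_i}|)$ cover $|X|$, set $W=\coprod_i W_{x_i}$ and $V=\coprod_i V_{x_i}$ (a finite disjoint union of affines, hence affine), and let $p,v$ be the evident combined morphisms. Every listed property of $p$ and $v$ except the Nisnevich condition is then immediate. For the Nisnevich condition, let $U_i \subseteq X$ be the open substack over which $p_{x_i}$ admits a section through $w_{x_i}$; such a $U_i$ exists because a section over a residual gerbe to a representable étale morphism extends to an open neighborhood. Define $X_j = U_1\cup\cdots\cup U_j$ inductively; the locally closed substack $X_j\setminus X_{j-1}$ then admits a section to $p$ via the $W_{x_j}$-component, yielding the required splitting sequence $\emptyset=X_0\subseteq X_1\subseteq\cdots\subseteq X_n=X$.

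The principal obstacle is the local step: producing a finite flat (respectively étale) cover of a pointed Nisnevich neighborhood requires careful control of the residual gerbe through a Henselization of its coarse space, and is where the separatedness of the diagonal (forcing the stabilizers to be finite, not merely quasi-finite) is really used. The Deligne--Mumford refinement then runs in parallel using the étale version of Rydh's theorem: since the stabilizers are étale, the finite flat covers produced in the Henselization step can be taken to be étale, so the combined $v$ is étale.
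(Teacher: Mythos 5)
Your globalization step contains a genuine gap, and it sits exactly where the difficulty of the theorem lies. You define $U_i\subseteq X$ as an open substack over which $p_{x_i}$ admits a section, on the grounds that ``a section over a residual gerbe to a representable \'etale morphism extends to an open neighborhood.'' This is false already for schemes: for the \'etale double cover $\Gm\to\Gm$, $t\mapsto t^2$, over $\Q$, the fibre over $t=1$ has a rational point, but no nonempty Zariski open $U$ admits a section, since a section would exhibit $t$ as a square in $\mathcal{O}(U)\subseteq\Q(t)$. Sections through a rational point of an \'etale morphism exist only over the henselization, which is precisely why Nisnevich coverings are defined by splitting sequences over reduced locally closed strata rather than by Zariski-local sections. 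Moreover, even granting the existence of such opens, your choice of $x_1,\dots,x_n$ only ensures that the \emph{images} $p_{x_i}(|W_{x_i}|)$ cover $|X|$; at a point $x'$ of such an image other than $x_i$ there need be no point of $W_{x_i}$ inducing an isomorphism of residual gerbes, hence no section over the residual gerbe at $x'$, and no reason that $U_1\cup\cdots\cup U_n=X$. So the sequence $X_0\subseteq\cdots\subseteq X_n$ you build need not terminate at $X$, and the Nisnevich property is not established. The local step is also asserted rather than proved: applying ``Rydh's construction to the strict Henselization of the coarse space of the residual gerbe'' (which is just a separable closure of $\kappa(x)$) does not by itself yield an \'etale neighbourhood of $x$ in $X$ with residual-gerbe control---that descent/approximation step is of essentially the same depth as the refinement being proved.

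For contrast, the paper's proof avoids pointwise sections entirely. It starts from Rydh's presentation \cite[Thm.~7.1]{MR2774654}: a representable, separated, quasi-finite, \emph{flat}, finitely presented surjection $u\colon U\to X$ with $U$ affine, and takes $W=\underline{\mathrm{Hilb}}^{\mathrm{open}}_{U/X}$, the \'etale, representable and separated $X$-stack of open and closed subschemes of fibres of $u$. The splitting sequence comes from a filtration of $X$ by quasi-compact opens such that $u$ is finite and flat over each reduced stratum $Z_i$; the section of $p$ over $Z_i$ is then the tautological one classified by the finite flat family $u\mid_{Z_i}$ itself, so no extension of point-sections is ever needed. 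The universal family $v\colon V\to W$ is finite, flat and finitely presented, and $V$ is made affine by passing to the Keel--Mori coarse space $W_{\cms}$ (quasi-affine, with $\pi\circ v$ affine) and refining by a Nisnevich covering of $W_{\cms}$ by an affine scheme, using Lemma \ref{L:nis_props}. If you wish to salvage your outline, you would have to replace ``sections extend to opens'' by a spreading-out argument producing sections over reduced strata and arrange sections over the residual gerbes of \emph{all} points of $X$, which in effect reconstructs the stratification-plus-Hilbert-functor argument of the paper.
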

\begin{proof}
  The proof is similar to \cite[Prop.~6.11]{MR3084720} and \cite[Thm.~7.3]{MR2774654}. 

  By \cite[Thm.~7.1]{MR2774654}, there is an affine scheme $U$ and a representable, separated, quasi-finite, flat, surjective and morphism $u\colon U \to X$ of finite presentation. Let $W=\underline{\mathrm{Hilb}}^{\mathrm{open}}_{{U}/{X}} \to X$ be the subfunctor of the relative Hilbert scheme parameterizing open and closed immersions to $U$ over $X$. It follows that $p\colon W \to X$ is \'etale, representable and separated \cite[Cor.~6.2]{MR2821738}. 

  We now prove that $p$ is a Nisnevich covering. To see this, we note that there exists a sequence of quasi-compact open immersions
  \[
  \emptyset=X_0 \subseteq X_1 \subseteq \cdots \subseteq X_{r} =X
  \]
  such that the restriction of $u$ to $Z_i=\red{(X_{i}\setminus X_{i-1})}$ for
  $i=1$, $\dots$, $r$ is finite, flat and finitely presented. By definition of $p\colon W\to X$, it follows immediately that $p\mid_{Z_i}$ admits a section corresponding to $u\mid_{Z_i}$ and so $p$ is a separated Nisnevich covering. 

  Let $v\colon V \to W$ be the universal family, which is finite, flat, surjective and of finite presentation. Also, $V \to U$ is representable, \'etale and separated \cite[Cor.~6.2]{MR2821738}. Suitably shrinking $W$, we obtain a separated Nisnevich covering $p\colon W \to X$ of finite presentation fitting into a $2$-commutative diagram
  \begin{equation}
  \xymatrix{V \ar[r]^q \ar[d]_v & U \ar[d]^u \\ W \ar[r]^p & X,}\label{eq:1}
\end{equation}  
  and $q$ is \'etale, separated and surjective. By Zariski's Main Theorem \cite[Thm.~A.2]{MR1771927}, $q$ is quasi-affine. By \cite[Thm.~5.3]{MR3084720}, $W$ has a coarse space $\pi\colon W \to W_{\cms}$ such that $W_{\cms}$ is a quasi-affine scheme and $\pi\circ v$ is affine. By Example \ref{ex:nisnevich_scheme} and Lemma \ref{L:nis_props}, we may further reduce to the situation where $W_{\cms}$ is an affine scheme. Since $\pi\circ v$ is affine, the result follows.
\end{proof}
\bibliography{references}
\bibliographystyle{bibstyle}
\end{document}